\newtheorem{theorem}{Theorem}
\newtheorem{lemma}{Lemma}
\newtheorem{observation}{Observation}
\newtheorem{problem}{Problem}
\newtheorem{proposition}{Proposition}
\newtheorem{claim}{Claim}[proposition]
\newtheorem{definition}{Definition}
\newtheorem{case}{Case}
\newtheorem{remark}{Remark}
\numberwithin{equation}{section}
\tikzstyle{none}=[inner sep=0mm]
\tikzstyle{bluenode}=[fill=blue, draw=black, shape=circle, minimum size=0.2cm, 
\tikzstyle{blacknode}=[fill=black!50, draw=black, shape=circle, minimum 
\tikzstyle{rednode}=[fill={rgb,255: red,244; green,0; blue,0}, draw=black, 
\tikzstyle{square}=[draw=black, shape=rectangle, minimum 
\tikzstyle{blackedge}=[-, draw=black, fill=none, line width=0.15mm]
\tikzstyle{blueedge}=[-, draw=blue, fill=none, line width=0.3mm]
\tikzstyle{blackedge_thick}=[-, draw=black, opacity=0.9, line width=0.45mm, fill=none]
\tikzstyle{black_thick}=[-, draw=black!80,opacity=0.9, line width=0.5mm, fill=none]
\tikzstyle{rededge}=[-, draw=red!80,opacity=0.8]
\tikzstyle{rededge_thick}=[-, line width=0.5mm, opacity=0.9, draw=red!75]
\tikzstyle{blue_thick_opacity}=[-, fill opacity=0.7, draw=blue, 
\tikzstyle{blue_thick}=[-, line width=0.5mm, draw=blue]
\newcommand{\proofend}{{\hfill$\Box$}}
\let\oldbibliography\thebibliography
\renewcommand{\thebibliography}[1]{%
  \oldbibliography{#1}%
  \setlength{\itemsep}{-2pt}%
  \setlength{\baselineskip}{13pt}
  \setlength{\lineskiplimit}{-\maxdimen}
}
\newcommand{\Rot}{\textnormal{Rot}}
\begin{document}

 \captionsetup[figure]{labelfont={bf},name={Fig.},labelsep=period}
	\title{The reducibility of optimal 1-planar graphs \thanks{E-mail addresses:  
\url{lczhangmath@163.com} (L. Zhang), \url{hyqq@hunnu.edu.cn.} (Y. Huang, corresponding author).
} }

\author{Licheng Zhang $^{\dag}$,Yuanqiu Huang $^{\ddag}$ \\
{\small $^{\dag}$ School of Mathematics, Hunan  University} \\ 
 {\small Changsha, Hunan, 410082, P.R. China} \\ 
 {\small $^{\ddag}$School of Mathematics and Statistics, Hunan Normal University} \\
{\small Changsha, Hunan, 410081, P.R. China }  
}

\date{}
\maketitle
\begin{abstract}
 A graph is  reducible if it is the lexicographic product of two 
smaller non-trivial graphs.  It is well-known that a 1-planar  graph  with $n ~(\ge3)$ vertices has at most $4n-8$ edges, and a 1-planar graph $G$ with $n$ vertices is optimal if $G$  has exactly $4n-8$ edges. In this paper, we characterize the reducibility of optimal 1-planar graphs. This work is motivated by a problem posed by Bucko and Czap in 2015, which concerns determining the 1-planarity of the lexicographic product of a graph and two isolated vertices.



\vskip 0.2cm
\noindent {\bf Keywords:} 
lexicographic product, 1-planarity, reducibility

\noindent {\bf MSC:} 05C10, 05C62, 05C76
\end{abstract}

\section{Introduction}

\subsection{The study of 1-planarity of graphs}
All graphs considered here are simple and finite. Let $G=(V, E)$ be a graph. Its order and size are $|V|$ and $|E|$, respectively. A drawing of $G$ is a mapping $D$ that assigns to each vertex in $V$ a distinct point in the plane and to each edge $u v$ in $E$ a continuous arc connecting $D(u)$ and $D(v)$. 
A drawing of a graph is {\it $1$-planar} if each edge is 
crossed at most once. A graph is  {\it $1$-planar} if it has a 1-planar 
drawing, and such drawing is called {a \it $1$-plane graph}. 
  1-planar graphs were first studied by Ringel (1965) \cite{Ringel1965}. 
 Since then, many properties of 1-planar graphs have 
  been widely investigated; see \cite{Czap2013, Huang2022s, zhangxin2016} for  examples,  
  or \cite{kobourov2017} for  a  survey. 
  
Determining the 1-planarity of a graph is a  fundamental problem in the topic of 1-planar graphs. Unfortunately, determining the 1-planarity of a graph is NP-complete \cite{Korzhik2013},  and it has been pointed out in \cite{matsumoto2021} that 1-planar graphs cannot be characterized by using a finite set of forbidden (topological) minors, unlike the well-known criterion (Wagner's theorem and Kuratowski's theorem) for planar graphs. Based on these two main reasons, the 1-planarity of only a few graphs has been determined, and even for graphs with given a small order, their 1-planarity remains unknown \cite{binucci2020}.  
In addition,  few necessary conditions for 1-planarity are known.  Currently, researchers are focusing on specific classes of graphs (with well-defined structures), including operations on  graphs, to determine their 1-planarity. For examples, Czap and Hudák determined the 1-planarity of complete multipartite graphs \cite{czap2012}. Czap, Hudák and Madaras \cite{czap2014} studied the 1-planarity of the join of two graphs.

\begin{definition}
 A {\it lexicographic product} $G \circ H$ of two graphs $G$ 
and $H$ is a graph 
 such that the vertex set of $G\circ H$ is the Cartesian product $V (G)\times V 
 (H)$ and two vertices $(g_1,h_1)$ and $(g_2,h_2)$ are adjacent in $G\circ H$ 
 if and only if
 \begin{itemize}
   \item $g_1$ is adjacent to $g_2$ in $G$, or
   \item $g_1=g_2$ and $h_1$ 
 is adjacent to $h_2$ in $H$.
 \end{itemize}

\end{definition}

 In 2015, Bucko and Czap \cite{bucko2015} discussed the 
1-planarity of lexicographic products of graphs. The authors propose a conjecture:
A graph $G\circ K_2$ is $1$-planar if and only if $G$ is a \emph{cactus}, which is a connected graph in which every edge belongs to
at most one cycle.
Recently, Matsumoto and Suzuki \cite{matsumoto2021} confirmed the conjecture. Thus, combining the works \cite{bucko2015} and \cite{matsumoto2021}, the 1-planarity of the lexicographic product of two graphs leaves only the following unsolved problem.
\begin{problem}[\cite{bucko2015}]\label{openPro}
Let $G$ be a connected graph with maximum degree at least $3$. Characterize the 
$1$-planarity of  $G\circ 2K_1$.
\end{problem}

Note that the lexicographic product is not commutative \cite[Chapter 5, Page 56]{hammack2011}, i.e., $G \circ H \neq H \circ G$ in general. A graph is  {\it 
trivial} if it has just one vertex or no vertex.  A 
graph is called {\it reducible} if it is the lexicographic product of 
two non-trivial graphs, otherwise is called {\it irreducible}\footnote{ In general, determining whether a graph is reducible is as difficult as determining whether two graphs are isomorphic \cite{feigenbaum1986}.}. 
It is well-known that a 1-planar graph with $n(\geq 3)$ vertices has at most $4 n-8$ edges, and a 1-planar graph  with $n$ vertices is \emph{optimal} if it has exactly $4 n-8$ edges. This means that optimal 1-planar graphs are a class of 1-planar graphs with the maximum number of edges. In this paper, we focus on characterizing the reducibility of optimal 1-planar graphs. We shall show that, with the exception of a single graph $ K_{2,2,2,2}$, all optimal 1-planar graphs are not reducible. 
   As a byproduct, we provide the upper bound on the size of the left factor $G$ for the graph $G \circ 2 K_1$ to be 1-planar, which will serve as a necessary condition for Problem \ref{openPro}.

\begin{theorem}\label{main}
Any optimal $1$-planar graph $G$ is not reducible unless $G\cong K_{2,2,2,2}$. 
\end{theorem}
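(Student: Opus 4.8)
The plan is to dispatch the ``if'' direction quickly and then to extract structural consequences from Suzuki's characterization for the ``only if'' direction. For ``if'': $K_{2,2,2,2}=K_4\circ 2K_1$ (in fact also $=K_2\circ C_4$), so it is reducible, and since it has $8$ vertices, $24=4\cdot 8-8$ edges, and is $1$-planar (e.g.\ by the classification of $1$-planar complete multipartite graphs), it is optimal $1$-planar. So the content is the ``only if'' direction.

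Assume $G$ is optimal $1$-planar and $G=A\circ B$ with $p:=|V(A)|\ge 2$, $q:=|V(B)|\ge 2$, $n=pq$. Writing $G=Q+R$ as in Suzuki's structure theorem, with $Q$ a $3$-connected quadrangulation and $R$ the crossing pairs, each vertex is the endpoint of exactly one diagonal inside each of its $\deg_Q(v)$ incident faces, and these diagonals are distinct; hence $\deg_G(v)=2\deg_Q(v)$ for all $v$. Consequently $G$ is $3$-connected, has only even degrees, and $\delta(G)\ge 6$; moreover $K_4\subseteq G$ (a face of $Q$ with its crossing pair is a $K_4$) and $\omega(G)\le 6$ (since $K_7$ is not $1$-planar). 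I would transfer all of this to $A,B$ via $\deg_G((a,b))=q\deg_A(a)+\deg_B(b)$, $\omega(G)=\omega(A)\,\omega(B)$, the identity $|E(G)|=q^{2}|E(A)|+p\,|E(B)|=4pq-8$, and the observation that $A$ must have an edge and in fact be connected (otherwise $G$ is disconnected), so $|E(A)|\ge p-1$.

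Next comes a finite case analysis. From $4\le\omega(A)\omega(B)\le 6$, either $\omega(B)=1$ (forcing $A\supseteq K_4$), or $\omega(A)=1$ (impossible), or $\{\omega(A),\omega(B)\}\in\{\{2,2\},\{2,3\}\}$. In the last two families, $1$-planarity of the subgraphs $K_{q,q}\subseteq G$ and, when $\omega(A)\ge 3$, $K_{q,q,q}\subseteq G$, bounds $q$ by a small constant; then the parity of degrees, the divisibility of $4pq-8$, and the bound $|E(A)|\ge p-1$ eliminate everything except $q=4$ with $B=C_4$ and $A$ triangle-free, and there $q^{2}|E(A)|=4pq-8-p|E(B)|$ gives $|E(A)|=(3p-2)/4<p-1$ unless $p=2$, so $A=K_2$ and $G=K_2\circ C_4=K_{2,2,2,2}$. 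When $\omega(B)=1$ we have $A\supseteq K_4$, hence $K_{q,q,q,q}\subseteq G$ and $6q^{2}\le 16q-8$, forcing $q=2$; so $B=2K_1$, $G=A\circ 2K_1$, $|E(A)|=2p-2$, $\delta(A)\ge 3$, and (a $K_4$ of $G$ meets each independent module $B_i$ in at most one vertex) $A$ contains $K_4$, so $p\ge 4$. It remains to show $p=4$.

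Proving that $A\circ 2K_1$ optimal $1$-planar implies $A=K_4$ is the crux and, I expect, the main obstacle, since it is a restricted instance of Problem~\ref{openPro}. The cheap constraints are: every $X\subseteq A$ gives $X\circ 2K_1\subseteq G$, hence $1$-planar, so $|E(X)|\le 2|V(X)|-2$, and together with $|E(A)|=2(p-1)$ this means (Nash--Williams) that $A$ is the edge-disjoint union of two spanning trees; also $A$ contains neither $K_5-e$ (else $(K_5-e)\circ 2K_1=K_{4,2,2,2}\subseteq G$, which has $36>4\cdot 10-8$ edges and is not $1$-planar) nor two copies of $K_4$ sharing an edge, so every vertex outside a fixed $K_4$ of $A$ has at most two neighbours in it. For small $p$ these constraints clash with $\delta(A)\ge 3$ purely by counting the edges incident to the $p-4$ vertices outside the $K_4$. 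For general $p$ I would pass to $Q$ itself and use that each module $B_i=\{u_i,v_i\}$ is a pair of false twins of $G$: this rigidly controls how $u_i,v_i$ and their $2\deg_A(a_i)$ incident edges meet the bipartition of $Q$, and combined with the $3$-connectivity of $Q$ and the two-spanning-tree structure of $A$ it should force $Q$ to be the $3$-cube, i.e.\ $G=K_{2,2,2,2}$. The genuinely delicate step, where most of the technical work should live, is ruling out the graphs $A$ that survive every cheap test — such as a $K_4$ joined by a perfect matching to a disjoint triangle — by a direct non-$1$-planarity argument for $A\circ 2K_1$.
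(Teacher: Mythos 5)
Your reductions of the ``easy'' cases are fine and in places slicker than the paper's: using $\omega(A\circ B)=\omega(A)\,\omega(B)$ together with $K_4\subseteq G$, $\omega(G)\le 6$, the parity of degrees coming from $\deg_G(v)=2\deg_Q(v)$, and the edge identity $q^2|E(A)|+p|E(B)|=4pq-8$ does dispose of every case except $B\cong 2K_1$ (the paper instead handles these via the two $1$-planar drawings of $K_{3,3}$ and the cactus lemma for $G\circ K_2$). But the proposal does not prove the theorem, because the one case you yourself identify as ``the crux'' --- $G=A\circ 2K_1$ with $A$ connected, $|E(A)|=2p-2$, $\delta(A)\ge 3$, $K_4\subseteq A$, and $p\ge 5$ --- is left at the level of ``should force $Q$ to be the $3$-cube'' and ``a direct non-$1$-planarity argument'' to be supplied later. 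Your own example ($K_4$ joined by a perfect matching to a disjoint triangle) shows that the cheap constraints you list (subgraph sparsity $|E(X)|\le 2|V(X)|-2$, two edge-disjoint spanning trees, no $K_5-e$, no two $K_4$'s sharing an edge) do not close this case, and no mechanism is given that would; ``false twins plus $3$-connectivity of $Q$'' is a heuristic, not an argument. So the heart of the theorem is missing.

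For comparison, this is exactly where the paper spends almost all of its effort (its Case 3). Rather than excluding candidate left factors $A$ one at a time, it works inside the (unique, by Suzuki) $1$-planar drawing $D$: an edge $u_iu_j$ of $A$ gives four vertices $u,v,x,y$ inducing a $4$-cycle $C$ (Claims 3--5); one shows $C$ cannot self-cross, cannot be entirely non-crossing, must in fact have at least two crossed edges, and that the two edges $vx$, $uy$ are non-crossing, using Lemma \ref{2cell}, Lemma \ref{alternate_edges}, Lemma \ref{clean_oddcycle} and the false-twin property (Claim \ref{fact_3}). This forces the subdrawings $F$ and then $F'$, whose faces are all crossed triangles, so by Lemma \ref{2cell}(ii) no further vertex can exist and $D$ is the drawing $F''$ of $K_{2,2,2,2}$; thus $|V(A)|=4$ falls out of the drawing analysis rather than from numerical constraints on $A$. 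If you want to salvage your outline, you would need to carry out an argument of this kind (or an equivalent one on the quadrangulation $Q$) in full; as written, the proposal establishes only the peripheral cases.
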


 \begin{figure}[H]
\centering
\begin{tikzpicture}[scale=0.4]
	\begin{pgfonlayer}{nodelayer}
		\node [style=blacknode] (0) at (-0.95, 4) {};
		\node [style=blacknode] (1) at (3.05, 4) {};
		\node [style=blacknode] (2) at (-0.95, 0) {};
		\node [style=blacknode] (3) at (3.05, 0) {};
		\node [style=blacknode] (4) at (-2.95, 6) {};
		\node [style=blacknode] (5) at (-2.95, -2) {};
		\node [style=blacknode] (6) at (5.05, 6) {};
		\node [style=blacknode] (7) at (5.05, -2) {};
	\end{pgfonlayer}
	\begin{pgfonlayer}{edgelayer}
		\draw [style=blueedge, in=65, out=25, looseness=2.20] (4) to (7);
		\draw [style=blueedge, in=155, out=115, looseness=2.20] (5) to (6);
		\draw [style=blueedge] (6) to (0);
		\draw [style=blueedge] (4) to (1);
		\draw [style=blueedge] (4) to (2);
		\draw [style=blueedge] (0) to (5);
		\draw [style=blueedge] (2) to (1);
		\draw [style=blueedge] (0) to (3);
		\draw [style=blueedge] (6) to (3);
		\draw [style=blueedge] (1) to (7);
		\draw [style=blueedge] (5) to (3);
		\draw [style=blueedge] (2) to (7);
		\draw [style={black_thick}] (4) to (6);
		\draw [style={black_thick}] (4) to (0);
		\draw [style={black_thick}] (4) to (5);
		\draw [style={black_thick}] (5) to (2);
		\draw [style={black_thick}] (2) to (3);
		\draw [style={black_thick}] (3) to (1);
		\draw [style={black_thick}] (1) to (0);
		\draw [style={black_thick}] (0) to (2);
		\draw [style={black_thick}] (5) to (7);
		\draw [style={black_thick}] (7) to (3);
		\draw [style={black_thick}] (1) to (6);
		\draw [style={black_thick}] (6) to (7);
	\end{pgfonlayer}
\end{tikzpicture}

\caption{A 1-planar drawing of $K_{2,2,2,2}$}
\label{K2222_1}
\end{figure}

\begin{theorem}\label{coro1}
Let $G$ be a graph of order $n\ge 5$. If $G\circ 2K_1$ is 
$1$-planar, then $|E(G)|\le 2n-3$.
\end{theorem}

In a certain sense, Theorem \ref{main} subtly suggests that optimal 1-planar graphs lack strong symmetry (from the perspective of the lexicographic product), as the structure of the graph $G \circ H$ implicitly encodes numerous copies of the right factor $H$ and complete bipartite subgraphs rooted in the left factor $G$.
On the other hand, the upper bound on the size of a reducible 1-planar graph is also naturally given. Interestingly, by reducing the number of edges by just one, there exist infinitely many reducible 1-planar graphs.

\begin{theorem}\label{coro0}
Let $G$ be a $1$-planar graph of order $n\ge 9$. 
If $G$ is reducible,  then $|E(G)|\le 4n-9$. Moreover, the bound for $n$ being divisible by 3  is 
tight.

\end{theorem}

\subsection{Terminology and notations}\label{notations}
 The terms not defined here can be found in \cite{west2001}.  Let  $K_n$ 
denote  the complete graph of order $n$, respectively. The 
disjoint union of $k$ copies of a graph $G$ is denoted by $kG$. A \emph{$k$-cycle } $C_k$ is a cycle of with $k$ vertices.

Let  $G$ be a graph.  If $S$ is a set of vertices of $G$, the 
vertex-induced subgraph $G[S]$ is the subgraph of $G$ that has $S$ as its set 
of vertices and contains all the edges of $G$ that have both end-vertices in 
$S$.  We denote by $G\cong H$ that graphs $G$ and $H$ are isomorphic. We 
denoted by $K_{n_1,n_2,\cdots, n_k }$ a complete multipartite graph with $k$ 
partition classes of sizes $n_1, 
n_2,\dots, n_k$, respectively.  We call a cycle $C$ of $G$ {\it even}  if $|V(C)|$ is even.

We call $G$ and $H$ the {\it left} and {\it  
 right factors} of the lexicographic product
 $G \circ H$, respectively.  
 The lexicographic product is also known as graph substitution, since $G \circ 
H$ is obtained from $G$ by substituting a copy $H_u$ of $H$ for  every 
vertex $u$ of $G$ and then joining all vertices of $H_u$ with all vertices of 
$H_v$ if $uv \in E(G)$\footnote{ In some literature, lexicographic product  $G\circ H$ is a special case of the blow-up of $G$, where each vertex of $G$ is blown up by  the same subgraph $H$.}.

 All graphs considered here 
are simple. A drawing is \textit{good} if it satisfies the following three conditions:
\begin{itemize}
  \item No edge crosses itself.
  \item No two edges cross more than once.
  \item No two  edges incident with a common vertex cross.
\end{itemize}

Let $G$ be a 1-planar graph with a 1-planar drawing $D$. The rotation $\Rot_D\left(u\right)$ of a vertex $u$ in $D$ as the cyclic permutation that records the (cyclic) counter-clockwise order in which the edges leave $u$. 
The rotation system naturally defines two \emph{successive} edges incident with $u$ if they appear consecutively in $\Rot_D\left(u\right)$. An edge $e$ of $G$ is 
called {\it crossed} in $D$ if it crosses with any other edge 
in $D$,  and is  {\it uncrossed } in $D$ 
otherwise. A cycle $C$ of $G$ is {\it uncrossed}  in $D$ if 
each edge on $C$ is uncrossed in $D$, and is {\it crossed } in $D$. The {\it planar skeleton} $\mathcal{S}(D)$ of $D$ is the 
subgraph of $G$ by removing all crossed edges of $D$. Let $H$ be a subgraph of 
$G$. The subdrawing $D|H$ of $H$ induced by $D$ is called a {\it restricted 
drawing} of $D$. Let $\mathcal{P}$ be a plane. Similar to planar drawings, the 
1-planar drawing $D$ also defines the {\it faces}, which are the connected 
parts of $\mathcal{P}\setminus D$. Each face $f$ contains on its boundary a 
number of 
vertices and crosseds of $D$; these are called the {\it corners} of $f$. A 
face is {\it uncrossed} if all incident corners are vertices, and is {\it 
crossed} otherwise. A {\it $k$-face} is a face  whose boundary walk has a 
length of exactly $k$. Let $L$ be a closed curve in  $\mathcal{P}$ 
that does not cross itself. Thus,  $L$  separates $\mathcal{P}$  into two open 
regions, the bounded one (i.e., the 
interior of $L$) and the unbounded one (i.e., the exterior of $L$). We denote 
by $L_{int}$ and $L_{out}$ the interior and exterior of $L$, respectively.

The structure of the remaining sections is as follows. In the next section, we first introduce auxiliary lemmas, including the 1-planarity of lexicographic products and  elementary properties on optimal 1-planar graphs. In Section \ref{sec3} and \ref{sec4}, we characterize the reducibility of optimal 1-planar graphs, depending on the order of the left and right factors. In the proof, we focus on the drawing structures of two types of small subgraphs: $C_4$ and $K_{3,3}$, along with their surrounding structures.

\section{Preliminaries}

First, we introduce two lemmas regarding the properties of the lexicographic product of graphs. Lemma \ref{lex_edges}(i) can be found in  \cite[Chapter 5, page 58]{hammack2011}, and Lemma \ref{lex_edges}(ii) follows directly from the definition of lexicographic product.

\begin{lemma}[\cite{hammack2011}]\label{lex_edges}
Let $G$ and $H$ be graphs. Then the following statements hold.
\begin{itemize}
  \item[(i)] $G\circ H$ is connected if and only if $G$ is connected, and 
  \item[(ii)] $|E(G\circ H)|=|V(G)|\times|E(H)| + 
 |V(H)|^2\times|E(G)|.$
\end{itemize}  
\end{lemma}

Since optimal 1-planar graphs are connected (in fact, 4-connected \cite{suzuki2010}), the following lemma naturally follows from Lemma \ref{lex_edges}(i).

\begin{lemma}\label{lfactor_con}
 Let $G$ be an optimal 1-planar graph. If $G=G_1\circ G_2$, then $G_1$ is connected.
\end{lemma}

The following Lemma \ref{oblex1} directly follows from the definition of lexicographic products.

\begin{lemma}\label{oblex1}
Let $G$ and $H$ be graphs. Let $V(G)=\{g_1,g_2,\dots, g_s\}$ and $V(H)=\{h_1,h_2,\dots, h_l\}$.
If  graph $G'=G\circ H$, then the following 
statements hold.
\begin{itemize}
\item [(i)] For $1\le i\le j\le s$, $G'[\{(g_i,h_1),(g_i,h_2),\dots, 
(g_i,h_l)\}]\cong G'[\{(g_j,h_1),(g_j,h_2),\dots, \\(g_j,h_l)\}]\cong H$, and
\item [(ii)] 
if a vertex $u$ in $G'$ is adjacent to $(g_k,h_i)$ where $1\le k \le s$ and 
$1\le i \le l$,
then $u$ is adjacent to each vertex in 
$\{(g_k,h_1),(g_k,h_2),\dots, (g_k,h_i),\dots, (g_k,h_{l})\}$.

\end{itemize} 
\end{lemma}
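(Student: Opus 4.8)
The plan is to verify both statements directly from the definition of the lexicographic product; no drawing-theoretic input is needed, and the whole argument is a matter of unwinding notation. Throughout I will use that $G$ and $H$ are simple (so $G$ has no loop, i.e. $g_ag_a\notin E(G)$), and that by definition $(g_a,h_b)(g_c,h_d)\in E(G')$ holds exactly when either $g_ag_c\in E(G)$, or $g_a=g_c$ and $h_bh_d\in E(H)$.

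For part (i), I would fix an index $i$ and consider the bijection $\phi\colon V(H)\to\{(g_i,h_1),\dots,(g_i,h_l)\}$ given by $\phi(h_t)=(g_i,h_t)$. It then suffices to show that $h_ah_b\in E(H)$ if and only if $(g_i,h_a)(g_i,h_b)\in E(G')$. Since the two vertices $(g_i,h_a)$ and $(g_i,h_b)$ share the first coordinate $g_i$ and $g_ig_i\notin E(G)$, the first clause of the adjacency rule can never apply to them, so $(g_i,h_a)(g_i,h_b)\in E(G')$ is equivalent to $h_ah_b\in E(H)$. Hence $\phi$ is a graph isomorphism, and replacing $i$ by $j$ gives the statement for the fiber over $g_j$.

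For part (ii), I would write $u=(g_m,h_t)$ and add the (clearly intended) assumption that $u$ does not lie in the fiber $\{(g_k,h_1),\dots,(g_k,h_l)\}$, i.e. $g_m\neq g_k$. Then the adjacency $u\sim(g_k,h_i)$ can only come from the first clause, so $g_mg_k\in E(G)$. But this condition does not involve the second coordinate at all, so for every $r\in\{1,\dots,l\}$ the pair $(g_m,h_t),(g_k,h_r)$ also satisfies the first clause, whence $u\sim(g_k,h_r)$ in $G'$; thus $u$ is adjacent to every vertex of the fiber over $g_k$.

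There is no genuine obstacle here; the only point I would be careful about is the precise reading of part (ii). Without the assumption that $u$ lies outside the fiber over $g_k$, the statement as literally written can fail — e.g. take $H$ with $h_th_i\in E(H)$ but $h_th_1\notin E(H)$ and $u=(g_k,h_t)$ — so in the write-up I would explicitly require $u\notin\{(g_k,h_1),\dots,(g_k,h_l)\}$, which is exactly the form in which the lemma gets invoked in Section \ref{proofs}.
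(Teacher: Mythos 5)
Your proof is correct and takes essentially the same route as the paper: both arguments simply unwind the definition of the lexicographic product, yours just spelling out the explicit isomorphism $\phi$ for (i) and the two adjacency clauses for (ii). Your caveat that (ii) needs the hypothesis $u\notin\{(g_k,h_1),\dots,(g_k,h_l)\}$ is a legitimate sharpening: the paper's one-line proof tacitly assumes $u$ lies in a different fiber, which is indeed the only way the lemma is invoked later.
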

%

We present some lemmas on the 1-planarity of lexicographic products of graphs from \cite{bucko2015} and \cite{matsumoto2021}.
\begin{lemma}[\cite{bucko2015}]\label{G1_3G2_4}
Let $G$ be a connected graph with $|V(G)|\ge 3$, and let $H$ be a graph 
with $|V(H)|\ge 4$. Then $G\circ H$ is not $1$-planar.
\end{lemma}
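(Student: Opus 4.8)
The plan is to reduce the general statement to a single small instance and then invoke a known edge bound. The first step is a monotonicity observation: if $G'$ is a subgraph of $G$ and $H'$ is a subgraph of $H$, then $G'\circ H'$ is a subgraph of $G\circ H$ (immediate from the definition of the lexicographic product, or from the substitution viewpoint behind Lemma \ref{oblex1}). Since a connected graph $G$ with at least three vertices contains a path $P_3$ on three vertices as a subgraph, and a graph $H$ with at least four vertices contains $4K_1$ as a subgraph, it follows that $P_3\circ 4K_1$ is a subgraph of $G\circ H$. As every subgraph of a $1$-planar graph is $1$-planar, it suffices to prove that $P_3\circ 4K_1$ is \emph{not} $1$-planar.

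The second step is to identify $P_3\circ 4K_1$ explicitly. Writing $P_3=v_1v_2v_3$ and letting $V_i$ (with $|V_i|=4$) denote the copy of $4K_1$ substituted for $v_i$, the product has all $16$ edges between $V_1$ and $V_2$, all $16$ edges between $V_2$ and $V_3$, and no other edges; taking $V_2$ as one part and $V_1\cup V_3$ as the other part exhibits $P_3\circ 4K_1\cong K_{4,8}$. (As a consistency check, Lemma \ref{lex_edges} gives $3\cdot 0 + 4^2\cdot 2 = 32 = 4\cdot 8$ edges.)

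Finally, $K_{4,8}$ is bipartite with $n=12$ vertices and $32$ edges. Using the known fact that every bipartite $1$-planar graph on $n\ge 4$ vertices has at most $3n-8$ edges, a $1$-planar drawing of $K_{4,8}$ would force $32\le 3\cdot 12-8=28$, a contradiction. Hence $K_{4,8}$, and therefore $G\circ H$, is not $1$-planar.

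The monotonicity reduction and the identification $P_3\circ 4K_1\cong K_{4,8}$ are routine; the only real content, and hence the main obstacle, is the non-$1$-planarity of $K_{4,8}$ (already its subgraph $K_{4,5}$ has $20>3\cdot 9-8=19$ edges). If one prefers not to quote the bipartite edge bound, it must be re-derived at this point by an Euler-formula count on the planarization of a putative $1$-planar drawing, using that faces avoiding all crossings have length at least $4$ (bipartiteness) while faces incident to a crossing vertex have length at least $3$; alternatively one may simply cite the statement from \cite{bucko2015}.
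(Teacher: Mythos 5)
The paper itself offers no proof of this lemma: it is quoted verbatim from Bucko and Czap \cite{bucko2015}, so there is no internal argument to measure yours against. Your reduction is sound and is a reasonable way to make the statement self-contained: monotonicity of the lexicographic product in both factors, the facts that a connected graph on at least $3$ vertices contains $P_3$ and that a graph on at least $4$ vertices contains $4K_1$, the identification $P_3\circ 4K_1\cong K_{4,8}$ (with $32$ edges on $12$ vertices), and the hereditarity of $1$-planarity are all correct. The only real content is, as you say, the non-$1$-planarity of $K_{4,8}$, and this is external to the present paper, so your proof in effect trades one citation for another: you would need to cite the bipartite bound $m\le 3n-8$ (Karpov's theorem, not in this paper's bibliography), or, more conveniently, the Czap--Hud\'ak characterization of $1$-planar complete multipartite graphs \cite{czap2012}, which is already cited here and yields directly that $K_{4,5}\subseteq K_{4,8}$ is not $1$-planar.

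Two cautions. First, your closing suggestion that the bipartite bound can be re-derived on the spot by an Euler count, using only that uncrossed faces have length at least $4$ and crossed faces length at least $3$, is too optimistic: combining those observations with the standard planarization/edge-removal arguments only yields bounds of the form $m\le 4n-8$ (which is $40$ for $n=12$ and excludes nothing), and the genuine $3n-8$ bound requires a finer analysis; so this step should be a citation, not a sketch. Second, ``simply cite the statement from \cite{bucko2015}'' would be circular if the statement meant is the lemma itself; cite \cite{czap2012} (for $K_{4,5}$) or Karpov's bipartite bound instead.
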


\begin{lemma}[\cite{bucko2015}]\label{K2G2_5}
Let $H$ be a graph and let $G=K_2$. Then the following  statements hold.

\begin{itemize}
  \item[(i)]  If $|V(H)|\ge 5$,  then $G\circ H$ 
is not $1$-planar, and
  \item [(ii)] if $|V(H)|\le 4$, then $H$ is a subgraph of $C_4$ or $H$ is a 
subgraph of $C_3$. 

\end{itemize}
\end{lemma}


\begin{lemma}[\cite{bucko2015,matsumoto2021}]\label{lexcactus}
Let $G$ be a graph. Then $G\circ K_2$ is $1$-planar if and only if $G$ is a 
cactus.
\end{lemma}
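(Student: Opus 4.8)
The statement is precisely the resolution of Conjecture~\ref{conj}: the ``if'' part is due to Bucko and Czap \cite{bucko2015}, and the ``only if'' part to Matsumoto and Suzuki \cite{matsumoto2021}. I indicate the approach I would take for each direction.

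For the ``if'' part, the first step is to record the shape of $G\circ K_2$: each vertex $u$ of $G$ becomes an edge $\widehat{u}=(u,1)(u,2)$, which I call the \emph{gadget edge} of $u$, and each edge $uv$ of $G$ becomes the copy of $K_4$ on $\{(u,1),(u,2),(v,1),(v,2)\}$, so that gadget edges coming from adjacent vertices of $G$ lie in $K_4$'s glued along them. Since a cactus is assembled along its block--cut tree from blocks that are edges or cycles, I would build a $1$-planar drawing blockwise. The two base drawings are: $K_2\circ K_2=K_4$, which is planar; and $C_k\circ K_2$, for which I would use an explicit ``necklace'' drawing obtained by arranging the $k$ gadget edges cyclically and, for each edge $uv$ of $C_k$, drawing the two ``parallel'' connectors $(u,1)(v,1)$ and $(u,2)(v,2)$ without crossings and letting the two ``diagonal'' connectors $(u,1)(v,2)$ and $(u,2)(v,1)$ cross each other exactly once; then no edge is crossed twice, and for $k=3$ this recovers a $1$-planar drawing of $K_6=C_3\circ K_2$. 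Both base drawings can be taken with the gadget edge of any prescribed vertex on the boundary of a face. The inductive step removes a leaf block $B$ of the cactus attached at a cut vertex $u$, draws $\big(G-(B-u)\big)\circ K_2$ with $\widehat{u}$ on the boundary of a face, and glues in the drawing of $B\circ K_2$ along $\widehat{u}$ (a $1$-sum along the edge $\widehat{u}$) without creating any new crossing; because an edge still bounds two faces after such a gluing, all blocks meeting at a cut vertex can be attached one after another.

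For the ``only if'' part I would argue by contraposition. If $G$ is not a cactus, then some edge lies on two distinct cycles, so $G$ contains a subgraph $\Theta$ that is a subdivision of the multigraph consisting of two vertices joined by three parallel edges; that is, $\Theta$ has two branch vertices joined by three internally disjoint paths of lengths $a\le b\le c$ with $b\ge 2$ (simplicity forbids two paths of length one). Since $H\subseteq G$ implies $H\circ K_2\subseteq G\circ K_2$ and every subgraph of a $1$-planar graph is $1$-planar, it suffices to show that $\Theta\circ K_2$ is not $1$-planar for every such $\Theta$, and this is the part I expect to be the main obstacle. It resists the two cheap attacks: the paths may be arbitrarily long, so one cannot reduce to finitely many graphs; and a global edge count is of no use, since already $K_{2,3}\circ K_2=\Theta_{2,2,2}\circ K_2$ has $5\cdot 1+2^2\cdot 6=29<4\cdot 10-8$ edges and yet, as \cite{matsumoto2021} shows, is not $1$-planar. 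The plan would be to suppose a good $1$-planar drawing $D$ of $\Theta\circ K_2$ and work locally: around each gadget edge $\widehat{v}$, the (at most three) $K_4$'s on the edges of $\Theta$ at $v$ force a rigid rotation of the edges incident with $\widehat{v}$ and pin down which of them may be crossed, which in turn forces the ``doubled'' copy of each path of $\Theta$ to be drawn as a thickened arc joining the two branch-vertex gadgets; one then examines the three closed curves formed by pairs of these thickened arcs and counts, along each, the crossings forced by the gadget $K_4$'s, reaching a contradiction because some edge of $D$ would have to be crossed twice (this is in the spirit of the drawing analysis of \cite{matsumoto2021}; an alternative would be to exhibit some unavoidable bounded-size non-$1$-planar configuration along one of the paths). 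Combining the two directions gives the lemma.
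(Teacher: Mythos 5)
The paper does not prove this lemma at all: it is imported verbatim as the resolution of Conjecture~\ref{conj}, with the ``if'' part credited to \cite{bucko2015} and the ``only if'' part to \cite{matsumoto2021}, and your proposal does essentially the same thing — it attributes the two directions correctly, sketches the blockwise gluing construction for cacti that matches \cite{bucko2015}, and reduces the converse to the non-$1$-planarity of $\Theta\circ K_2$, which is exactly the content you (rightly) defer to the drawing analysis of \cite{matsumoto2021}. So your treatment is consistent with, and no less complete than, the paper's own handling of the statement; the only caveat is that the core of the ``only if'' direction remains a cited plan rather than a proof, just as it does in the paper.
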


The following result on 1-planar graphs is fundamental and has been independently obtained by many researchers.

\begin{lemma}[\cite{Bodendiek1983,suzuki2010}]\label{maxedges}
Let $G$ be a $1$-planar graph of order $n$.  Then \[
|E(G)| \le 
\begin{cases} 
\binom{n}{2} & \text{if } n \leq 6; \\
4n - 9 & \text{if } n = 7 \text{ or } 9; \\
4n - 8 & \text{otherwise} .
\end{cases}
\]
\end{lemma}

Two  drawings of a graph are {\it isomorphic} if there is a 
homeomorphism of the sphere that maps one drawing to the other; otherwise, the drawings are \textit{non-isomorphic}.\footnote{In fact, the sphere and the plane differ topologically. However, a graph is 1-planar if and only if it can be embedded on the sphere such that each edge crosses at most once, which can be achieved through stereographic projection. The only difference is that a drawing $D$ in the sphere ensures that each face of $D$ is a finite face. This definition of isomorphism for graph drawings is given on the sphere rather than the plane to avoid concerns about  the external infinite face of a 1-planar drawing.} In this following, we give  some properties of optimal 
1-planar graphs. A  plane graph is  a {\it quadrangulation} if every face is a 4-face.
References \cite{Bodendiek1983,suzuki2010} gave a  relationship 
between optimal 1-planar graphs and 3-connected quadrangulations.

\begin{lemma}[\cite{Bodendiek1983},\cite{suzuki2010}] \label{optimal_unique}
Let $G$ be an optimal 1-plane graph. Then $D$  is obtained by inserting a pair of crossed edges to each 4-face of a $3$-connected 
quadrangulation. Moreover, the 1-planar drawings of $G$ are unique (up to isomorphism).
\end{lemma}

%


Lemma \ref{optimal_unique} provides powerful structural information on optimal 1-planar graphs, which directly lead to the following Lemmas \ref{alternate_edges}, \ref{2cell}, and \ref{clean_oddcycle}.

\begin{lemma}\label{alternate_edges}
Let $G$ be an optimal 1-planar graph with a 1-planar drawing $D$. Then edges incident with a vertex $v\in V(G)$ appear successively as crossed and uncrossed edges in $\Rot_D(v)$.
\end{lemma}


\begin{lemma} \label{2cell}
Let $G$ be an optimal 1-planar graph with a 1-planar drawing $D$. Let $v_0v_2$ and $v_1v_3$ be edges of $G$, which cross each other in  point $c$ in $D$. Then the 
 following two statements hold.
 \begin{itemize}
 \item[(i)] The induced subgraph $H:=G[\{v_0,v_1,v_2,v_3\}] \cong K_4$, and any edge in $E(H)\setminus \{v_0v_2,v_1v_3\}$ is uncrossed in $D$, and
  \item[(ii)] $cv_{i}v_{i+1}$  bounds a crossed face of $D$ with indices taken 
  modulo $4$.
 \end{itemize}
 \end{lemma}
 
%

\begin{lemma}[\cite{suzuki2017}] \label{clean_oddcycle}
Let $G$ be an optimal 1-planar graph with a 1-planar drawing $D$.  If $C$ is an uncrossed cycle in $D$. Then $C$ is even.
\end{lemma}


The following lemma reveals that optimal 1-planar graphs do not exhibit hereditary properties with respect to optimality \footnote{A hereditary property typically refers to a property of a graph that is also preserved in its (induced) subgraphs.}, which will be used in the final step of the proof of Proposition \ref{pro1}.

\begin{lemma}\label{lem:opnonhereditary}
Any properly subgraph of an optimal 1-planar graph  $G$ is not optimal 1-planar.
\end{lemma}
\begin{proof}
Let $H$ be a properly subgraph of $G$. Suppose $H$ is optimal 1-planar with a 1-planar drawing $D'$. Clearly, since $H$ is optimal, $G[V(H)]=H$. 
Since $D'$ is unique, then $D|D'$ is fixed. by Lemma \ref{optimal_unique}, every face of $D$ is a crossed 3-face. Since $G$ is 4-connected \cite{suzuki2010}, any 3-face of $D'$ also bounds a 3-face of $D$, that is, $D$ has no vertices from $G \setminus H$.  This implies that $H \cong G$, a  contradiction.
\end{proof}

The following lemma which gives two distinct 1-planar drawings of $K_{3,3}$, particularly the first one, will be used in Section \ref{sec4}.

\begin{lemma}[\cite{Angelini2020}] \label{Two_D_K33}
There are exactly two non-isomorphic $1$-planar drawings  of 
$K_{3,3}$, as shown in Fig. \ref{K_33drawing}.
\end{lemma}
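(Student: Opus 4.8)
The plan is to fix a $1$-planar drawing of $K_{3,3}$ and track how the $9$ edges are distributed between the planar skeleton and the crossing pairs. Since $K_{3,3}$ has $6$ vertices and $9$ edges, and a planar bipartite graph on $6$ vertices has at most $2\cdot 6-4=8$ edges, while $K_{3,3}$ itself is non-planar, at least one edge must be a crossing edge; moreover crossings come in pairs of independent edges, so the number of crossing edges is even and positive, hence at least $2$. First I would argue that in a good $1$-planar drawing of $K_{3,3}$ there is exactly one crossing pair: if there were two disjoint crossing pairs, removing all four crossing edges would leave a planar bipartite graph on $6$ vertices with $5$ edges whose completion back to $K_{3,3}$ forces the four removed edges to be drawable with only two crossings in a way one can rule out by a short case analysis on which $4$-cycles of $K_{3,3}$ the crossing pairs can form (two independent edges of $K_{3,3}$ always lie on a common $4$-cycle, and a $1$-planar drawing restricted to that $4$-cycle plus its crossing chords is a $K_4$-type gadget). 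So the planar skeleton $\mathcal{S}(D)$ is $K_{3,3}$ minus a perfect-matching-type pair — actually minus two independent edges $e_1,e_2$ forming (together) two ends of a $4$-cycle — giving a planar bipartite graph with $7$ edges on $6$ vertices.

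Next I would enumerate the planar embeddings of this skeleton. The skeleton $K_{3,3}-\{e_1,e_2\}$ (with $e_1,e_2$ independent) is $2$-connected, and I would check it has a unique planar embedding up to reflection (one can see it as a hexagon with one long chord, or analyze its faces directly): its faces are two $4$-faces and one $6$-face, or one $4$-face and two ... — the exact face vector needs a quick computation, but the point is the embedding is essentially unique. Then the two missing edges $e_1,e_2$ must be added back, each inside some face, each crossed exactly once, and by the single-crossing-pair conclusion they must cross \emph{each other}; this forces $e_1$ and $e_2$ to be routed through a common face whose boundary accommodates both, and there are only a bounded number of ways to do this. Carrying out this bounded case check yields exactly two combinatorially distinct drawings, which are the $D'$ and $D''$ of Fig.~\ref{K_33drawing}; any two drawings with the same skeleton embedding and the same crossing pair routed the same way are related by a homeomorphism of the sphere, hence isomorphic in the paper's sense.

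The main obstacle will be the case analysis ruling out two crossing pairs and pinning down that exactly two non-isomorphic completions survive: one must be careful that "the same abstract skeleton with an abstract crossing pair" can still give non-isomorphic \emph{drawings} because the two endpoints of $e_i$ may sit on the face boundary in inequivalent cyclic positions, and conversely that seemingly different pictures are actually sphere-homeomorphic (e.g.\ choosing the outer face differently). I would handle this by always working on the sphere (so no distinguished outer face) and by labelling the two colour classes $\{a_1,a_2,a_3\}$, $\{b_1,b_2,b_3\}$ and using the colour-preserving automorphism group of $K_{3,3}$ (of order $2\cdot(3!)^2=72$) to cut down the cases: up to this symmetry there is one choice of the crossing pair $\{e_1,e_2\}$, one embedding of the skeleton, and then precisely two inequivalent ways to insert the crossing chords, matching the claimed count of two. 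A clean way to present the final identification is to exhibit explicit rotation systems for $D'$ and $D''$ and note they are not related by any orientation-preserving-or-reversing relabelling, while every admissible drawing reduces to one of them.
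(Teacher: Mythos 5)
Your key first step is false, and it is contradicted by the very drawing $D''$ you are trying to count. You claim that every good $1$-planar drawing of $K_{3,3}$ has exactly one crossing pair, so that the planar skeleton is $K_{3,3}$ minus two independent edges (7 edges). But a $1$-planar drawing need not be crossing-minimal: the fact that $\mathrm{cr}(K_{3,3})=1$ only forces \emph{at least} one crossing. In the drawing $D''$ of Fig.~\ref{K_33drawing} there are three crossings (six crossed edges); its non-crossed edges form a perfect matching of $K_{3,3}$, so its planar skeleton has only three edges, not seven. Consequently the ``short case analysis'' you promise for ruling out more than one crossing pair cannot exist — it would disprove the lemma itself — and your subsequent enumeration (unique embedding of the $7$-edge skeleton, then reinsert two mutually crossing edges) can only ever produce drawings with a single crossing, i.e.\ essentially only $D'$. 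You would either conclude there is exactly one drawing, or you would have to silently reintroduce the multi-crossing case you claimed to exclude; either way $D''$ is missed. A related smaller slip: your ``$K_4$-type gadget'' intuition for two crossing edges (all four endpoints pairwise adjacent) is a property of \emph{optimal} $1$-planar graphs (Lemma~\ref{2cell}), not of arbitrary $1$-planar drawings, and it cannot hold in the bipartite graph $K_{3,3}$ anyway.

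If you want a workable route, you must organize the case analysis so that drawings with two or three crossings are genuinely explored rather than excluded. The paper does this by passing to a $K_{2,3}$ subgraph: by Harborth's result $K_{2,3}$ has exactly three $1$-planar drawings, and one then checks, for each of them and for each region in which the sixth vertex can be placed, how its three edges can be routed with at most one crossing each; the drawing $D_1$ (planar $K_{2,3}$) always completes to $D'$, the one-crossing drawing $D_2$ completes to $D'$ or to the three-crossing drawing $D''$, and $D_3$ admits no completion. Your crossing-count/skeleton bookkeeping could in principle be salvaged by splitting on the number of crossings ($1$, $2$, or $3$ are a priori possible, since at most $\lfloor 9/2\rfloor=4$ edges... in fact up to four crossings must be considered and then eliminated or realized), but that is a substantially larger enumeration than the one you describe, and as written the argument does not go through.
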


\begin{figure}[H]
\centering
\begin{tikzpicture}[scale=0.45, bezier bounding box]
	\begin{pgfonlayer}{nodelayer}
		\node [style=blacknode] (15) at (2.975, 1) {};
		\node [style=blacknode] (16) at (10.575, 1) {};
		\node [style=blacknode] (17) at (6.775, 7.75) {};
		\node [style=square] (18) at (8.525, 2.25) {};
		\node [style=square] (19) at (6.775, 5.25) {};
		\node [style=square] (36) at (5.35, 2.25) {};
		\node [style=blacknode] (74) at (-8.025, 8) {};
		\node [style=square] (75) at (-8.025, 1) {};
		\node [style=square] (76) at (-6.525, 4.5) {};
		\node [style=blacknode] (77) at (-1.025, 1) {};
		\node [style=square] (78) at (-1.025, 8) {};
		\node [style=blacknode] (79) at (-2.525, 4.5) {};
	
	\end{pgfonlayer}
	\begin{pgfonlayer}{edgelayer}
		\draw [style=black_thick] (19) to (15);
		\draw [style=black_thick] (19) to (16);
		\draw [style=black_thick] (17) to (18);
		\draw [style=black_thick] (15) to (18);
		\draw [style=black_thick] (16) to (18);
		\draw [style=black_thick] (19) to (17);
		\draw [style=black_thick] (17) to (36);
		\draw [style=black_thick] (36) to (15);
		\draw [style=black_thick] (36) to (16);
		\draw [style=black_thick] (74) to (75);
		\draw [style=black_thick] (74) to (76);
		\draw [style=black_thick] (75) to (77);
		\draw [style=black_thick] (75) to (79);
		\draw [style=black_thick] (79) to (76);
		\draw [style=black_thick] (76) to (77);
		\draw [style=black_thick] (74) to (78);
		\draw [style=black_thick] (79) to (78);
		\draw [style=black_thick] (77) to (78);
	\end{pgfonlayer}
\end{tikzpicture}
 \caption{Two non-isomorphic 1-planar drawings of $K_{3,3}$}
 \label{K_33drawing}
\end{figure}


\section{Left or right factor with order two}\label{sec3}
In this section, we shall prove that an optimal 1-planar graph, when reduced into factors where at least one of the left or right factors has order two, can only be isomorphic to $K_{2,2,2,2}$. 

Before that, we can easily verify the following simple lemma.

\begin{lemma}\label{lem:K2222}
The graph $K_{2,2,2,2}$ is optimal 1-planar and a 1-planar drawing of $K_{2,2,2,2}$ is shown in Fig. \ref{K2222_1}. Moreover  $K_{2,2,2,2}=K_2\circ C_4$ and $K_{2,2,2,2}=K_4\circ 2K_1$.
\end{lemma}

\begin{proposition}\label{pro1}
Let $G$ be an optimal 1-planar graph. If $G=G_1 \circ G_2$, then, $G \cong K_{2,2,2,2}$ if either of the following holds:
\begin{itemize}
  \item[(i)] $\left|V\left(G_1\right)\right|=2$ and $\left|V\left(G_2\right)\right| \geq 2$; 
  \item[(ii)] $\left|V\left(G_1\right)\right| \geq 3$ and $\left|V\left(G_2\right)\right|=2$.
\end{itemize}

\end{proposition}
\begin{proof}
If (i) holds, we have the following Claim. 

\begin{claim}
$G_2$ is $C_4$ or a $C_3$ with a pendant edge.
\end{claim}
\begin{proof}
 As $G$ is connected, by Lemma \ref{lfactor_con}, it follows that $G_1$ is connected, and thus $G_1\cong K_2$. By Lemma \ref{K2G2_5},  we have $|V(G_2)|\le 4$. Since   $|V(G)|\ge 8$ (by Lemma \ref{maxedges})
and $G_1\cong K_2$,  we have $|V(G_2)|\ge 4$. So 
$|V(G_2)|=4$. Thus $|V(G)|=8$. By the optimality of $G$, we have $|E(G)|=24$. 
Hence, by Lemma \ref{lex_edges}, we have $|E(G_2)|\ge 4$ . So $G_2$ is $C_4$ or a $C_3$ with a pendant edge. 
\end{proof}

Furthermore, by Lemma \ref{K2G2_5}(ii), $G_2 \cong 
C_4$, and thus $G\cong K_{2,2,2,2}$.

Now, we consider the case where condition (ii) of the proposition is satisfied. The situation becomes a bit more complicated. We shall deduce that $G$ is isomorphic to $K_{2,2,2,2}$ through a series of claims, starting from replacing a single edge from the left factor $G_1$ with a cycle of length four (note that the right factor shall be consist of two isolated vertices by  the following  first claim).  We will see that the drawing structure of the 4-cycle in $D$ and its surrounding elements play a key role.

\begin{claim}\label{2K1}
$|V(G_1)|\ge 4$ and $G_2\cong 2K_1$.
\end{claim}
\begin{proof}
By Lemma \ref{maxedges}, one has $|V(G)|\ge 8$. Furthermore, since  $|V(G_2)| = 2$, it follows that $|V(G_1)|\ge 4$.
Suppose  $G_2 \cong K_2$. By Lemma \ref{lex_edges}  we 
have
\begin{equation}\label{eqa}
|E(G)|=|V(G_1)|+4\times|E(G_1)|.
\end{equation}
On the other hand, by the optimality of $G$, we have
\begin{equation}\label{eqb}
|E(G)|=4\times(2\times|V(G_1)|)-8.
\end{equation}
By combining (\ref{eqa}) with (\ref{eqb}), it follows that  
\begin{equation}\label{eq0}
|E(G_1)|=\frac{7}{4}|V(G_1)|-2.
\end{equation}


By Lemma \ref{lexcactus}, $G_1$ is a cactus. Then, by
a well-known result on the sizes of cactus graphs (\cite[Page 160]{west2001}),  we have 
\begin{equation}\label{eq1}
|E(G_1)|\le \Bigl\lfloor \frac{3(|V(G_1)|-1)}{2} \Bigl\rfloor.
\end{equation}
By (\ref{eq0}) and (\ref{eq1}), we have
 $|V(G_1)|\le 2$, a contradiction to $|V(G_1)| \ge 4$.
Hence,  $G_2\cong 2K_1$.
\end{proof}

 So we assume that $V(G_1):=\{u_1,u_2,\ldots,u_n\}$ where $n\ge 4$ and 
$V(G_2):=\{v_1,v_2\}$.
By Lemma \ref{lex_edges}(i), $G_1$ is connected. Hence, $E(G_1)\ne \emptyset$, and we may assume that $u_iu_j\in E(G_1)$  for 
$1\le i\ne j\le n$.  Then $u_i$ (resp. $u_j$) is substituted with two distinct
vertices $(u_i,v_1)$ and $(u_i,v_2)$ in $G$ (resp. $(u_j,v_1)$ and $(u_j,v_2)$ 
in $G$). 
 For simplicity, let $u:=(u_i,v_1)$, $v:=(u_i,v_2)$, $x:=(u_j,v_1)$ 
and 
$y:=(u_j,v_2)$. Let $V(G)\setminus \{u,v,x, y\}:=\{w_1,w_2,\ldots, w_{2n-4}\}$.

 
\begin{claim}\label{fact_3}
If a vertex $p$ of $G$ is adjacent to one vertex of $\{u,v\}$ (resp. 
of $\{x,y\}$), then $p$ is also adjacent to the other vertex of $\{u,v\}$ 
(resp. of $\{x,y\}$).
\end{claim}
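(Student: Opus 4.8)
The plan is to read this off directly from Lemma~\ref{oblex1}(ii). In the decomposition $G=G_1\circ G_2$ under consideration we have $G_2\cong 2K_1$ with vertex set $\{v_1,v_2\}$, and by construction $u=(u_i,v_1)$ and $v=(u_i,v_2)$ are exactly the two vertices forming the copy of $G_2$ substituted for the vertex $u_i$ of $G_1$; likewise $x=(u_j,v_1)$ and $y=(u_j,v_2)$ form the copy of $G_2$ substituted for $u_j$. So the statement is simply the assertion of Lemma~\ref{oblex1}(ii) restricted to these two copies.

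Concretely, I would apply Lemma~\ref{oblex1}(ii) with $(g_k,h_i)$ equal to $(u_i,v_1)$ (or $(u_i,v_2)$): it tells us that any vertex of $G$ adjacent to one vertex of $\{(u_i,v_1),(u_i,v_2)\}=\{u,v\}$ is adjacent to every vertex of that set, which is precisely the desired conclusion for $\{u,v\}$. Applying the same lemma with $g_k=u_j$ yields the conclusion for $\{x,y\}$. Alternatively, one can argue straight from the definition of the lexicographic product: writing $p=(u_k,v_\ell)$, adjacency of $p$ to $u=(u_i,v_1)$ forces $u_k\neq u_i$ (since $2K_1$ is edgeless, $u_k=u_i$ would place $p$ in $\{u,v\}$, a non-adjacent pair), hence $u_ku_i\in E(G_1)$, a condition independent of the second coordinate, so $p$ is also adjacent to $v=(u_i,v_2)$.

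Since the claim is an immediate corollary of Lemma~\ref{oblex1}(ii), I do not expect any substantive obstacle. The only point that needs care is the bookkeeping: confirming that $\{u,v\}$ and $\{x,y\}$ are indeed the copies of $G_2=2K_1$ sitting over $u_i$ and $u_j$, and noting that these copies are independent sets in $G$, so that a vertex adjacent to one of their members cannot itself be the other member. With these identifications in place the proof is a single sentence.
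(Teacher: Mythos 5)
Your proposal is correct and matches the paper's proof, which likewise derives the claim directly from Lemma~\ref{oblex1}(ii); your extra bookkeeping (identifying $\{u,v\}$ and $\{x,y\}$ as the copies of $2K_1$ over $u_i$ and $u_j$) is just a spelled-out version of the same one-line argument.
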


\begin{proof}
It follows  from Lemma \ref{oblex1} (ii) directly. 
\end{proof}

\begin{claim}\label{fact_1}
 $uv\notin E(G)$ and  $xy\notin E(G)$. Furthermore, $G[\{u,v,x,y\}]$ is a 4-cycle $uxvyu$. 
\end{claim}

\begin{proof}
By Claim \ref{2K1} and Lemma \ref{oblex1} (i), $G[\{u,v\}]\cong G[\{x,y\}]\cong 2K_1$. By the definition of lexicographic products, $\{ux,xv,vy,yu\} \subseteq E(G)$, as desired.
\end{proof}

Now, we denote the 4-cycle $uxvyu$ by $C$, and our focus below is directed towards $C$. First, we give Claims \ref{claim:Ccross} and \ref{fact_2}, which discuss the crossing properties of $C$: $C$ will not be uncrossed, and furthermore $C$ does not self-cross.

\begin{claim}\label{claim:Ccross}
$C$ is crossed in $D$.
\end{claim}
\begin{proof}

Suppose $C$ is uncrossed. We choose a neighbor of $u$, $w_1$, such that the edge $uw_1$ is successive to $ux$ in $\Rot_D(u)$ in the clockwise direction. By Lemma 
\ref{alternate_edges}, $uw_1$ is crossed in $D$, and thus $w_1\ne y$ since 
$C$ is uncrossed. By Claim \ref{fact_1}, $w_1\ne v$. So 
$w_1\in C_{int}$, shown in Fig \ref{cleancycle} (a).

 We may assume that $uw_1$ is crossed by $w_2w_3$. Notice that $w_2\ne v$ and $w_3\ne v$, otherwise, $uv\in E(G)$ by 
Lemma \ref{2cell} (i), a contradiction to Claim \ref{fact_1}.  Furthermore, 
we discuss the following three cases, each of which leads to contradictions.

\begin{case}
$w_i\cap \{x,y\}=\emptyset$ for $i=2,3$.  \textup{
Denote  by  $c$ the crossing created by $uw_1$ and $w_2w_3$. By Lemma 
\ref{2cell} (i), $uw_2$ and $ww_3$ are  uncrossed in $D$. By the choice of $uw_1$ (where $uw_1$ and $ux$ are successive edges)  the drawing shown in Fig. \ref{cleancycle} (b) will not occur. Thus $uw_2$ must be drawn shown in  Fig 
\ref{cleancycle} (c), and now $(uw_2cu)_{out}$ (or $(uw_3cu)_{out}$) is not
a crossed 3-face of $D$, a contradiction to Lemma \ref{2cell} (ii). }
\end{case}

\begin{figure}
\centering
\begin{tikzpicture}[scale=0.5]
	\begin{pgfonlayer}{nodelayer}
		\node [style=blacknode] (28) at (0.025, -3) {};
		\node [style=square] (29) at (5.025, -3) {};
		\node [style=square] (30) at (0.025, -8) {};
		\node [style=blacknode] (31) at (5.025, -8) {};
		\node [style=rednode] (32) at (3.525, -5) {};
		\node [style=none] (33) at (-0.6, -3) {$u$};
		\node [style=none] (34) at (5.6, -3) {$x$};
		\node [style=none] (35) at (-0.6, -8) {$y$};
		\node [style=none] (36) at (5.6, -8.025) {$v$};
		\node [style=rednode] (37) at (1.525, -5) {};
		\node [style=none] (38) at (4.5, -5.025) {$w_1$};
		\node [style=none] (39) at (0.6, -5.05) {$w_3$};
		\node [style=blacknode] (41) at (-0.025, 5) {};
		\node [style=square] (42) at (4.975, 5) {};
		\node [style=square] (43) at (-0.025, 0) {};
		\node [style=blacknode] (44) at (4.975, 0) {};
		\node [style=rednode] (45) at (3.475, 3) {};
		\node [style=none] (46) at (-0.6, 5) {$u$};
		\node [style=none] (47) at (5.6, 5) {$x$};
		\node [style=none] (48) at (-0.6, 0) {$y$};
		\node [style=none] (49) at (5.6, -0.025) {$v$};
		\node [style=rednode] (50) at (1.475, 3) {};
		\node [style=none] (51) at (4.4, 3.2) {$w_1$};
		\node [style=none] (52) at (0.925, 2.5) {$w_3$};
		
		\node [style=rednode] (54) at (3, 4) {};
		\node [style=none] (68) at (3.3, 4.5) {$w_2$};

		\node [style=blacknode] (70) at (-8.025, -3) {};
		\node [style=square] (71) at (-3.025, -3) {};
		\node [style=square] (72) at (-8.025, -8) {};
		\node [style=blacknode] (73) at (-3.025, -8) {};
		\node [style=rednode] (74) at (-4.525, -5) {};
		\node [style=none] (75) at (-8.7, -3) {$u$};
		\node [style=none] (76) at (-2.4, -3) {$x$};
		\node [style=none] (77) at (-8.7, -8) {$y$};
		\node [style=none] (78) at (-2.4, -8.025) {$v$};
		\node [style=rednode] (79) at (-6.525, -5) {};
		\node [style=none] (80) at (-4.4, -5.7) {$w_1$};
		\node [style=none] (81) at (-7.2, -5.05) {$w_3$};

		\node [style=rednode] (83) at (-5, -4) {};
		\node [style=none] (84) at (-4.7, -3.5) {$w_2$};
		\node [style=none] (85) at (-6, -6.75) {};
		\node [style=none] (86) at (-5.575, -5.0) {$c$};
		\node [style=blacknode] (87) at (-8.05, 5.025) {};
		\node [style=square] (88) at (-3.05, 5.025) {};
		\node [style=square] (89) at (-8.05, 0.025) {};
		\node [style=blacknode] (90) at (-3.05, 0.025) {};
		\node [style=rednode] (91) at (-4.55, 3.025) {};
		\node [style=none] (92) at (-8.7, 5.025) {$u$};
		\node [style=none] (93) at (-2.4, 5.025) {$x$};
		\node [style=none] (94) at (-8.7, 0.025) {$y$};
		\node [style=none] (95) at (-2.4, 0) {$v$};
		\node [style=none] (97) at (-3.6, 3.025) {$w_1$};
		\node at (-5.55, -1) {(a)};
		\node at (2.45, -1) {(b)};
		\node at (-5.55, -9) {(c)};
		\node at (2.45, -9) {(d)};
		\node [style=none] (100) at (-6.5, 3.5) {};
		\node [style=none] (101) at (-5.75, 4.5) {};
		\node [style=none] at (2.4, 3.0) {$c$};
	\end{pgfonlayer}
	\begin{pgfonlayer}{edgelayer}
		\draw [style=black_thick] (28) to (29);
		\draw [style=black_thick] (29) to (31);
		\draw [style=black_thick] (31) to (30);
		\draw [style=black_thick] (28) to (32);
		\draw (29) to (37);
		\draw [style=rededge] (32) to (30);
		\draw [style=rededge] (37) to (31);
		\draw [style=rededge] (32) to (29);
		\draw [style={rededge_thick}] (28) to (37);
		\draw [style={rededge_thick}] (37) to (30);
		\draw [style={black_thick}] (28) to (30);
		\draw [style=black_thick] (41) to (42);
		\draw [style=black_thick] (42) to (44);
		\draw [style=black_thick] (44) to (43);
		\draw [style=black_thick] (43) to (41);
		\draw [style=black_thick] (41) to (45);
		\draw [style=black_thick] (50) to (54);
		\draw [style=rededge] (41) to (54);
		\draw [style=black_thick] (70) to (71);
		\draw [style=black_thick] (71) to (73);
		\draw [style=black_thick] (73) to (72);
		\draw [style=black_thick] (72) to (70);
		\draw [style=black_thick] (70) to (74);
		\draw [style=black_thick] (79) to (83);
		\draw [style=rededge] (70)
			 to [in=-180, out=-75, looseness=0.75] (85.center)
			 to [in=-15, out=0, looseness=2.00] (83);
		\draw [style=black_thick] (87) to (88);
		\draw [style=black_thick] (88) to (90);
		\draw [style=black_thick] (90) to (89);
		\draw [style=black_thick] (89) to (87);
		\draw [style=black_thick] (87) to (91);
		\draw (101.center) to (100.center);
	\end{pgfonlayer}
\end{tikzpicture}
 \caption{The uncrossed $4$-cycle $uxvyu$ with the crossed edge $uw_1$ 
 in $D$}
 \label{cleancycle}
\end{figure}

\begin{case} $w_i\ne x$ and $w_j=y$ for $2\le i\ne j \le 3$.  \textup{ Similar to Case 1, this also leads to a contradiction. We omit the proof.}
\end{case}

\begin{case}$w_2=x$ or $w_3=x$.  
\textup{ We may assume, without loss of generality, that $w_2=x$. By Claim \ref{fact_1}, 
$w_3\ne y$, and thus $w_3\in C_{int}$; see Fig. \ref{cleancycle} 
(d). By Lemma  \ref{2cell} (i), $uw_3$ 
 and $xw_1$ are uncrossed edges in $D$. By Claim \ref{fact_3}, $\{w_1y,  
 w_3v,w_3y\}\subseteq E(G)$. As $C$ is uncrossed in $D$, 
 $w_1y$ must  cross $w_3v$, and thus  $w_3y$ is uncrossed  in $D$ by 
 Lemma  \ref{2cell} (i). Now the  3-cycle  $uyw_3u$ is uncrossed in 
 $D$,  a contradiction to Lemma \ref{clean_oddcycle}.}
\end{case}

Thus the claim holds.
\end{proof}

\begin{claim}\label{fact_2}
Any two edges of $C$ do not cross each other in $D$.
\end{claim}
\begin{proof} Since $D$ is
good,  it is only possible for two non-successive edges of $C$ to cross each other. Then by Lemma \ref{2cell} (i), $G[u,v,x,y]\cong
K_4$, and so $uv\in E(G)$, a contradiction to Claim \ref{fact_1}. 
\end{proof}

By Claims \ref{claim:Ccross} and \ref{fact_2},   we assume that edge $ux$ (on $C$) is crossed by some edge, namely $w_1w_2$. 
Furthermore, we claim that neither $w_1$ nor $w_2$ belongs to $C$.

\begin{claim}\label{calim:notliesC}
 $w_i\notin V(C)$ for $i=1,2$.
\end{claim} 
\begin{proof}
 As $D$ is good, $w_i\ne u$ and $w_i\ne x$ for $i=1,2$. If 
some end-vertex of $w_1w_2$, says $w_1$, is $v$, then $w_2\ne y$, otherwise $C$ 
is self-crossed in $D$, a contradiction to Claim \ref{fact_2}. Thus, $w_2\notin V(C)$, but now $uv \in E(G)$ by Lemma \ref{2cell} (i), since $v(=w_1)w_2$ crosses $ux$; this 
contradicts to Claim \ref{fact_1}. By symmetry of $w_1$ and $w_2$, we also 
have $w_2\ne v$. Similarly, $w_i\ne y$ for $i=1,2$. Thus, the claim holds.
\end{proof}

By Claims \ref{fact_2} and \ref{calim:notliesC}, we may assume that $w_1 \in C_{out}$ and $w_2\in C_{int}$, respectively. 
Furthermore, we shall prove that $C$ has exactly two crossed edges. In addition, in $C$, $vy$, which is not adjacent to $ux$, is the unique crossed edge other than $ux$s.  Before that, we observe the existence of the following subdrawing in $D$, which is an expansion of the current drawing of $C$.

\begin{claim}\label{DF}
$D$ contains a subdrawing $F$, as shown in
Fig. \ref{a_crossededge} (i).
\end{claim}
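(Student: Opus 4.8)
The plan is to read $F$ off from the structure already forced and then show that every remaining freedom in drawing the mandatory edges is either impossible or leads to a drawing isomorphic to $F$. I start from the situation established just above the claim: $ux$ is crossed by an edge $w_1w_2$ with $w_1\in C_{out}$, $w_2\in C_{int}$, and $w_1,w_2\notin V(C)$. Let $c$ be the crossing point of $ux$ and $w_1w_2$. By Lemma \ref{2cell}(i), $G[\{u,x,w_1,w_2\}]\cong K_4$ and the four edges $uw_1,w_1x,xw_2,w_2u$ are non-crossing under $D$; by Lemma \ref{2cell}(ii) the regions bounded by $c$, $w_i$, and the adjacent vertex of $\{u,x\}$ on $C$ are crossed $3$-faces, so no further vertex lies in them. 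This already fixes the local picture around $c$.

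Next I would invoke the twin property. Since $w_1$ is adjacent to both $u$ and $x$, Claim \ref{fact_3} gives $w_1v,w_1y\in E(G)$; likewise $w_2v,w_2y\in E(G)$. Recall also from Claim \ref{fact_1} that $uv,xy\notin E(G)$, and that $C=uxvyu$ is a crossing $4$-cycle whose only crossed edge so far is $ux$, while by Lemma \ref{alternate_edges} the edges around $u$ and around $x$ alternate crossing/non-crossing. The edges $w_1v,w_1y,w_2v,w_2y$ must now be inserted. The bulk of the work is to show there is essentially one way to do this consistently: such an edge cannot cross $ux$ again, nor any of the already-saturated edges $uw_1,xw_1,uw_2,xw_2$, so it is forced either to be non-crossing or to cross a specific edge of $C$; Lemma \ref{2cell}(i) applied to each such crossing produces a new $K_4$, which pins the cyclic position of that edge, and a non-crossing choice is repeatedly killed by the no-odd-non-crossing-cycle constraint of Lemma \ref{clean_oddcycle} together with the fact that $\mathcal S(D)$ is a quadrangulation (Lemma \ref{op3-connected}). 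Carrying this through, the routing of $w_1v,w_1y$ outside $C$ and of $w_2v,w_2y$ inside $C$ becomes uniquely determined, and assembling the forced crossing and non-crossing edges around $C$ and the two vertices $w_1,w_2$ yields exactly the subdrawing $F$ of Fig. \ref{F}, up to the sphere homeomorphism allowed in our notion of isomorphic drawings (with Lemma \ref{optimal_unique} guaranteeing $D$ itself is essentially unique).

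I expect the middle step to be the main obstacle. A priori there are several ways each of $w_iv,w_iy$ could be drawn --- which edge of $C$ it crosses, and on which side of the curve $w_1w_2$ it passes --- and every alternative must be excluded by exhibiting a doubly-crossed edge, two crossings on one edge, or a non-crossing triangle. To keep the case analysis from spawning ever more subcases, I would first locate $v$ and $y$ relative to the closed curve $uw_1xw_2u$, which cuts the sphere into four regions, and argue region by region; within each region the positions of the crossed $3$-faces from Lemma \ref{2cell}(ii) leave only finitely many slots for the remaining edges, and each slot is resolved by a one-line parity or double-crossing argument. Organising the bookkeeping so that every branch terminates cleanly is the delicate part of the proof.
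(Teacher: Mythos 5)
Your setup is right (Lemma \ref{2cell}(i) for the $K_4$ on $\{u,x,w_1,w_2\}$ and the non-crossing of $uw_1,w_1x,uw_2,xw_2$; Claim \ref{fact_3} for the existence of $w_1v,w_1y,w_2v,w_2y$), but the heart of the claim --- that these four edges must be routed exactly as in $F$ --- is never actually argued. You describe it as ``the bulk of the work'' and ``the delicate part'', propose a region-by-region bookkeeping scheme around the curve $uw_1xw_2u$, and stop there. That deferred step is precisely what the claim asserts, so as written the proposal has a genuine gap: no alternative routing is ever excluded, only a plan for excluding them is sketched.

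Moreover, the mechanism you sketch points in the wrong direction. You suggest that a crossing of one of $w_iv,w_iy$ with an edge of $C$ would be legitimate and would ``pin'' its position via a new $K_4$, while non-crossing choices are killed by Lemma \ref{clean_oddcycle}. The actual (and much shorter) argument is the opposite: each of the four edges can cross \emph{no} edge of $C$ at all. Indeed, $w_2v$ cannot cross $vx$ or $vy$ (incident edges, $D$ is good), cannot cross $ux$ (already crossed by $w_1w_2$), and cannot cross $uy$, since by Lemma \ref{2cell}(i) that would make $uv$ an edge, contradicting Claim \ref{fact_1}; the same three observations handle $w_2y$ (using $xy\notin E(G)$), and symmetrically $w_1v,w_1y$, which are therefore confined to the exterior region $(w_1uyvw_1)_{out}$ while $w_2v,w_2y$ lie in $C_{int}$. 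This immediately yields the subdrawing $F$; Lemma \ref{clean_oddcycle} plays no role in this particular claim, and no extensive case analysis over regions is needed. To repair your proof, replace the deferred bookkeeping by these explicit exclusions for each of the four edges.
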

\begin{proof}
By Lemma \ref{2cell} (i), $w_1u$, $w_1x$, $w_2u$ and $w_2x$ are uncrossed 
edges in $D$. By Claim \ref{fact_3}, $\{w_1y,w_1v,w_2v,w_2y \}\subseteq E(G)$.  Since $D$ is good, $w_2v$ cannot cross $vx$ or $vy$, and 
$w_2v$ also cannot cross $uy$, otherwise $uv \in E(G)$ by  Lemma 
\ref{2cell} (i), which contradicts Claim \ref{fact_1}. So $w_2v$ must be within
$C_{int}$. Similarly, $w_2y$ is also  within  $C_{int}$. Now observe that neither $w_1y$ nor $w_1v$ can be crossed by any edge among $uy$, $vx$, and $vy$. Therefore, $w_1y$ and $w_1v$  are  within $(w_1uyvxw_1)_{out}$. Thus, $D$ contains the subdrawing $F$, as shown in Fig. \ref{a_crossededge} (i).
\end{proof}

\begin{figure}[H]
\centering
\begin{tikzpicture}[scale=0.4]
		\begin{pgfonlayer}{nodelayer}
			\node [style=blacknode] (26) at (-1.975, 4) {};
			\node [style=square] (27) at (3.025, 4) {};
			\node [style=square] (28) at (-1.975, -1) {};
			\node [style=blacknode] (29) at (3, -1) {};
			\node [style=bluenode] (30) at (0.525, 2) {};
			\node [style=none] (31) at (-2.5, 4) {$u$};
			\node [style=none] (32) at (3.6, 3.975) {$x$};
			\node [style=none] (33) at (-2.2, -1.7) {$y$};
			\node [style=none] (34) at (3, -1.7) {$v$};
			\node [style=bluenode] (35) at (0.55, 6) {};
			\node [style=none] (36) at (-0.3, 1.975) {$w_2$};
			\node [style=none] (37) at (0.5, 6.7) {$w_1$};
		\end{pgfonlayer}
		\begin{pgfonlayer}{edgelayer}
			\draw [style=black_thick] (26) to (27);
			\draw [style=black_thick] (27) to (29);
			\draw [style=black_thick] (29) to (28);
			\draw [style=black_thick] (28) to (26);
			\draw [style=black_thick] (35) to (30);
			\draw [style=black_thick] (35) to (26);
			\draw [style=black_thick] (26) to (30);
			\draw [style=black_thick] (35) to (27);
			\draw [style=black_thick] (27) to (30);
			\draw [style=black_thick] (30) to (28);
			\draw [style=black_thick] (30) to (29);
			\draw [style=black_thick, in=135, out=180, looseness=1.50] (35) to (28);
			\draw [style=black_thick, in=45, out=0, looseness=1.50] (35) to (29);
		\end{pgfonlayer}
	\end{tikzpicture}
\quad
\begin{tikzpicture}[scale=0.4]
\begin{pgfonlayer}{nodelayer}
		\node [style=blacknode] (65) at (-3.975, 2) {};
		\node [style=square] (66) at (1.025, 2) {};
		\node [style=square] (67) at (-3.975, -3) {};
		\node [style=blacknode] (68) at (1, -3) {};
		\node [style=bluenode] (69) at (-1.475, 0) {};
		\node [style=none] (70) at (-4.5, 2) {$u$};
		\node [style=none] (71) at (1.7, 1.975) {$x$};
		\node [style=none] (72) at (-3.975, -3.7) {$y$};
		\node [style=none] (73) at (0.975, -3.7) {$v$};
		\node [style=bluenode] (74) at (-1.5, 4) {};
		\node [style=none] (75) at (-0.7, -0.075) {$w_2$};
		\node [style=none] (76) at (-1.475, 4.7) {$w_1$};
		\node [style=rednode] (78) at (-1.5, -1.875) {};
		\node [style=none] (94) at (-1.5, -2.5) {$w_4$};
	\end{pgfonlayer}
	\begin{pgfonlayer}{edgelayer}
		\draw [style=black_thick] (65) to (66);
		\draw [style=black_thick] (66) to (68);
		\draw [style=black_thick] (67) to (65);
		\draw [style=black_thick] (74) to (69);
		\draw [style=black_thick,  in=135, out=180, looseness=1.50] (74) to (67);
		\draw [style=black_thick, in=45, out=0,  looseness=1.50] (74) to (68);
		\draw [style=black_thick] (74) to (65);
		\draw [style=black_thick] (65) to (69);
		\draw [style=black_thick] (74) to (66);
		\draw [style=black_thick] (66) to (69);
		\draw [style=black_thick] (69) to (67);
		\draw [style=black_thick] (69) to (68);
		\draw [style=rededge] (65) to (78);
		\draw [style=rededge] (78) to (66);
		\draw [style={rededge_thick}] (78) to (67);
		\draw [style={rededge_thick}] (78) to (68);
		\draw [style={blackedge_thick}] (67) to (68);
	\end{pgfonlayer}
\end{tikzpicture}

\caption{(i) The subdrawing $F$; (ii) the edge $w_2y$ is crossed by $uw_4$ }
\label{a_crossededge}
\end{figure}

\begin{claim}\label{ccl}
$C$ has at least two crossed edges  in $D$. 
\end{claim}
\begin{proof}
Suppose $ux$ is a unique crossed edge.  By Claim \ref{DF},  
$D$ contains  $F$ as a subdrawing.  We claim that $w_2y$ 
is crossed by some edge, say $w_3w_4$; otherwise,  the 3-cycle $uw_2yu$ is  uncrossed 
 in $D$, a 
contradiction to Lemma \ref{clean_oddcycle}.  Furthermore, we assert that 
 $w_3=u$ or $w_4=u$. If not, then $w_3$ or $w_4$, say $w_3$, 
is within  $(uw_2yu)_{int}$. By Lemma \ref{2cell} (i), $w_3y\in E(G)$, and thus 
$w_3x \in E(G)$. But $w_3x$ crosses at least two times; this  
contradicts that $D$ is 1-planar.  Thus, $w_3=u$ or $w_4=u$. Without loss 
generality,  assume  that $w_3=u$. By Claim \ref{fact_3}, $w_4\ne v$, and 
thus $w_4\in (w_2yvw_2)_{int}$; see  Fig. \ref{a_crossededge}.  
Furthermore, $w_4y$ is  uncrossed in $D$ by Lemma \ref{2cell} (i), 
and thus $w_4x\in E(G)$ by Claim \ref{fact_3}. Now  $w_4x$ 
must cross $w_2v$, and hence $w_4v$ is uncrossed in $D$ by Lemma 
\ref{2cell} (i). Note that $vy$ is also uncrossed in $D$. So $yvw_4y$ is 
uncrossed in $D$, a contradiction to Lemma 
\ref{clean_oddcycle}. Therefore, $C$ has at least two crossed edges in $D$.
\end{proof}

\begin{claim}\label{cc2}
$vx$ and $uy$  are 
uncrossed in $D$.
\end{claim}
\begin{proof}
Note that $D$ contains the subdrawing $F$. As $vx$ and 
$uy$ are symmetric in $F$, we need 
only consider $vx$. We assume that $vx$ is crossed by some edge, namely 
$w_3w_4$; see Fig. \ref{twoad_crossedge}.  Observe that if $w_i\in V(F)$ 
for $i=3,4$, then they can  be $w_1$ or $w_2$ only. So it
is sufficient to consider three cases.

\begin{case}
 $w_3\notin V(F)$ and $w_4\notin V(F)$. \textup{
Without loss of generality, we may assume that $w_3\in
(xvw_2x)_{int}$ and $w_4 \in (xvw_2x)_{out}$; see Fig 
\ref{twoad_crossedge} (a).  Since 
$w_3w_4$ crosses $vx$, by Lemma \ref{2cell} (i) $w_3v\in E(G)$, and 
thus $w_3u\in E(G)$ by Claim \ref{fact_3}. But now $u_3u$ will be 
crossed at least twice, a contradiction.}
\end{case}

\begin{case}
 $w_3\notin V(F)$ and   $w_4=w_1$.  \textup{  See Fig \ref{twoad_crossedge} (b). Similar to Case 1, $w_3u$ will be crossed at least twice,  a 
contradiction.}
\end{case}

\begin{case}
 $w_3=w_2$. \textup{ In this case, $w_4\notin V(F)$ since $G$ is simple.
Since $D$ is 1-planar, $w_4\in (xvw_1x)_{int}$, as shown in Fig 
\ref{twoad_crossedge} (c). Since $w_2~(=w_3)w_4$ crosses $vx$, $w_4v\in E(G)$ by Lemma \ref{2cell} (i), and thus by 
Claim \ref{fact_3}, $w_4u\in E(G)$ . But now $u_4u$ is crossed  (by other edges) at least twice, a 
contradiction.}
\end{case}
Thus, $xv$ is uncrossed in $D$. Similarly, $uy$ is also uncrossed 
in $D$.  So the claim follows.
\end{proof}

\begin{figure}
\centering
\begin{tikzpicture}[scale=0.45]
	\begin{pgfonlayer}{nodelayer}
		\node [style=rednode] (26) at (-9.5, 1) {};
		\node [style=rednode] (27) at (-8, 1) {};
		\node [style=none] (28) at (-11, -4) {(a)};
		\node [style=none] (30) at (-7.9, 1.5) {$w_4$};
		\node [style=none] (31) at (-9.5, 1.5) {$w_3$};
		\node [style=blacknode] (47) at (-13.525, 3) {};
		\node [style=square] (48) at (-8.525, 3) {};
		\node [style=square] (49) at (-13.525, -2) {};
		\node [style=blacknode] (50) at (-8.55, -2) {};
		\node [style=bluenode] (51) at (-11.025, 1) {};
		\node [style=none] (52) at (-14.3, 3) {$u$};
		\node [style=none] (53) at (-7.8, 2.975) {$x$};
		\node [style=none] (54) at (-13.525, -2.8) {$y$};
		\node [style=none] (55) at (-8.575, -2.8) {$v$};
		\node [style=bluenode] (56) at (-11.05, 5) {};
		\node [style=none] (57) at (-12, 0.925) {$w_2$};
		\node [style=none] (58) at (-11.025, 5.925) {$w_1$};
		\node [style=none] (76) at (11.45, -4) {(c)};
		\node [style=none] (77) at (14.2, 1.75) {$w_4$};
		\node [style=blacknode] (78) at (8.425, 3) {};
		\node [style=square] (79) at (13.425, 3) {};
		\node [style=square] (80) at (8.425, -2) {};
		\node [style=blacknode] (81) at (13.4, -2) {};
		\node [style=bluenode] (82) at (10.925, 1) {};
		\node [style=none] (83) at (7.75, 3) {$u$};
		\node [style=none] (84) at (14.4, 2.975) {$x$};
		\node [style=none] (85) at (8.425, -3.05) {$y$};
		\node [style=none] (86) at (13.375, -3.05) {$v$};
		\node [style=bluenode] (87) at (10.9, 5) {};
		\node [style=none] (88) at (9.5, 0.925) {$w_2$};
		\node [style=none] (89) at (10.925, 5.925) {$w_1$};
		\node [style=rednode] (90) at (14.2, 1) {};
		\node [style=rednode] (91) at (1.7, 1) {};
		\node [style=none] (92) at (0.475, -4) {(b)};
		\node [style=none] (93) at (1, 1) {$w_3$};
		\node [style=blacknode] (94) at (-2.55, 3) {};
		\node [style=square] (95) at (2.45, 3) {};
		\node [style=square] (96) at (-2.55, -2) {};
		\node [style=blacknode] (97) at (2.425, -2) {};
		\node [style=bluenode] (98) at (-0.05, 1) {};
		\node [style=none] (99) at (-3.3, 3) {$u$};
		\node [style=none] (100) at (3.175, 2.975) {$x$};
		\node [style=none] (101) at (-2.55, -2.8) {$y$};
		\node [style=none] (102) at (2.4, -2.775) {$v$};
		\node [style=bluenode] (103) at (-0.075, 5) {};
		\node [style=none] (104) at (-1.2, 1.175) {$w_2$};
		\node [style=none] (105) at (-0.05, 5.925) {$w_1$};
	\end{pgfonlayer}
	\begin{pgfonlayer}{edgelayer}
		\draw [style=rededge] (26) to (27);
		\draw [style={black_thick}] (47) to (48);
		\draw [style={black_thick}] (48) to (50);
		\draw [style={black_thick}] (49) to (47);
		\draw [style={black_thick}] (56) to (51);
		\draw [style={black_thick}, in=135, out=180, looseness=1.50] (56) to (49);
		\draw [style={black_thick}, in=45, out=0, looseness=1.50] (56) to (50);
		\draw [style={black_thick}] (56) to (47);
		\draw [style={black_thick}] (47) to (51);
		\draw [style={black_thick}] (56) to (48);
		\draw [style={black_thick}] (48) to (51);
		\draw [style={black_thick}] (51) to (49);
		\draw [style={black_thick}] (51) to (50);
		\draw [style={black_thick}] (49) to (50);
		\draw [style={black_thick}] (78) to (79);
		\draw [style={black_thick}] (79) to (81);
		\draw [style={black_thick}] (80) to (78);
		\draw [style={black_thick}] (87) to (82);
		\draw [style={black_thick}, in=135, out=180, looseness=1.50] (87) to (80);
		\draw [style={black_thick}, in=45, out=0, looseness=1.50] (87) to (81);
		\draw [style={black_thick}] (87) to (78);
		\draw [style={black_thick}] (78) to (82);
		\draw [style={black_thick}] (87) to (79);
		\draw [style={black_thick}] (79) to (82);
		\draw [style={black_thick}] (82) to (80);
		\draw [style={black_thick}] (82) to (81);
		\draw [style={black_thick}] (80) to (81);
		\draw [style=rededge] (82) to (90);
		\draw [style=rededge] (90) to (81);
		\draw [style={black_thick}] (94) to (95);
		\draw [style={black_thick}] (95) to (97);
		\draw [style={black_thick}] (96) to (94);
		\draw [style={black_thick}] (103) to (98);
		\draw [style={black_thick}, in=135, out=180, looseness=1.50] (103) to (96);
		\draw [style={black_thick}, in=45, out=0, looseness=1.50] (103) to (97);
		\draw [style={black_thick}] (103) to (94);
		\draw [style={black_thick}] (94) to (98);
		\draw [style={black_thick}] (103) to (95);
		\draw [style={black_thick}] (95) to (98);
		\draw [style={black_thick}] (98) to (96);
		\draw [style={black_thick}] (98) to (97);
		\draw [style={black_thick}] (96) to (97);
		\draw [style=rededge, in=15, out=-15, looseness=2.00] (103) to (91);
		\draw [style=rededge] (91) to (97);
		\draw [style=rededge] (50) to (26);
	\end{pgfonlayer}
\end{tikzpicture}

\caption{Three 1-planar drawings involving the crossed edge $xv$}
\label{twoad_crossedge}
\end{figure}

\begin{figure}
\centering
\begin{tikzpicture}[scale=0.45]
	\begin{pgfonlayer}{nodelayer}
		\node [style=square] (13) at (-4.45, 5) {};
		\node [style=blacknode] (14) at (0.55, 5) {};
		\node [style=blacknode] (15) at (-4.45, 0) {};
		\node [style=square] (16) at (0.525, 0) {};
		\node [style=bluenode] (17) at (-1.95, 2.5) {};
		\node [style=none] (18) at (-5.4, 5) {$u$};
		\node [style=none] (19) at (1.35, 5) {$x$};
		\node [style=none] (20) at (-4.45, -0.75) {$y$};
		\node [style=none] (21) at (0.525, -0.775) {$v$};
		\node [style=bluenode] (22) at (-1.975, 7) {};
		\node [style=none] (23) at (-2.79, 2.49) {$w_2$};
		\node [style=none] (24) at (-1.975, 7.9) {$w_1$};
		\node [style=rednode] (25) at (-2, -1) {};
		\node [style=rednode] (26) at (-2, 0.75) {};
		\node [style=none] (27) at (-1.15, 0.8) {$w_4$};
		\node [style=none] (28) at (-2, -1.7) {$w_3$};
	\end{pgfonlayer}
	\begin{pgfonlayer}{edgelayer}
		\draw [style={black_thick}] (13) to (14);
		\draw [style={black_thick}] (14) to (16);
		\draw [style={black_thick}] (16) to (15);
		\draw [style={black_thick}] (15) to (13);
		\draw [style={black_thick}] (22) to (17);
		\draw [style={black_thick}] (22) to (13);
		\draw [style={black_thick}] (13) to (17);
		\draw [style={black_thick}] (22) to (14);
		\draw [style={black_thick}] (14) to (17);
		\draw [style={black_thick}] (17) to (15);
		\draw [style={black_thick}] (17) to (16);
		\draw [style={black_thick}, in=135, out=180, looseness=1.50] (22) to (15);
		\draw [style={black_thick}, in=45, out=0, looseness=1.50] (22) to (16);
		\draw [style=rededge] (26) to (25);
		\draw [style=rededge] (26) to (15);
		\draw [style=rededge] (15) to (25);
		\draw [style=rededge] (25) to (16);
		\draw [style=rededge] (16) to (26);
		\draw [style=rededge] (26) to (13);
		\draw [style=rededge] (26) to (14);
		\draw [style=rededge, in=210, out=-165, looseness=1.75] (25) to (13);
		\draw [style=rededge, in=315, out=-15, looseness=1.75] (25) to (14);
	\end{pgfonlayer}
\end{tikzpicture}

\caption{ The 1-planar drawing $F'$}
\label{K2222}
\end{figure}

By Claims \ref{ccl} and \ref{cc2}, the following  claim follows directly. 

\begin{claim}
$vy$  is crossed by some edge, namely $w_3w_4$. 
\end{claim}  

Furthermore, we shall prove that $w_3\notin V(F)$ and  $w_4 \notin V(F)$.

\begin{claim}\label{c11}
$w_3\notin V(F)$ and  $w_4 \notin V(F)$.
\end{claim}
\begin{proof} Since 
$w_3w_4$ has been crossed by $vy$, if $w_3\in V(F)$ or $w_4 \in V(F)$, then they can  be $w_1$ or $w_2$ only.  Without loss of generality, assume that $w_3 = w_1$, and thus $w_4\ne w_2$ by simplify of $G$.  
Thus, $w_4\in (w_2vyw_2)_{int}$. By Lemma 
\ref{2cell} (i),
$w_1v$ is  uncrossed in $D$. Note 
that $w_1x$ and $uy$ are uncrossed in $D$. So the 
3-cycle $vw_1yv$ is uncrossed in $D$, 
which  contradicts Lemma \ref{clean_oddcycle}. If $w_3=w_2$, then 
similarly, there exists an uncrossed 3-cycle $w_2uyw_2$ in $D$, 
a contradiction.  By symmetry, 
$w_4\ne w_i$  for $i=1,2$. Therefore, $w_3\notin V(F)$ and  $w_4 \notin V(F)$. 
\end{proof}

By Claim \ref{c11}, we can assume that $w_3$ and $w_4$ are within $(w_2vyw_2)_{out}$ and
$(w_2yvw_2)_{int}$, respectively. Furthermore, since $D$ is 1-planar and $w_3w_4$ crosses
$vy$,  we have $w_3\in (w_1yvw_1)_{out}$.

\begin{claim}\label{c12}
 $D$ contains a 1-planar drawing $F'$, shown in Fig. \ref{K2222}. 
\end{claim}
\begin{proof}
Similar to Claim \ref{DF}, we obtain this claim.
\end{proof}

Since $w_1v$ crosses $w_3x$ in $F'$,  by Lemma \ref{2cell} (i), $w_1w_3$ is uncrossed 
in $D$. Similarly, $w_2w_4$ is also  uncrossed 
in $D$. So we obtain  graph $F''$, which obtained from $F'$ by 
adding two uncrossed edges $w_1w_3$ and $w_2w_4$.  Now $F''\cong K_{2,2,2,2}$, and $F''$ is optimal 1-planar. So by Lemma \ref{lem:opnonhereditary},  $G\cong F''$. Combining this with Lemma \ref{lem:K2222}, the proposition follows.
\end{proof}

\section{Left and right factors with order at least three}\label{sec4}
In this section, we shall prove that an optimal 1-planar graph cannot be reduced into factors where both the left and right factors have order greater than three; see Proposition \ref{pro3}. This proof approach of Proposition \ref{pro3} is somewhat similar to that of Proposition \ref{pro1}(ii), but due to the order (which is three) of the right factor. Our focus is on considering a specific 1-planar drawing of subgraph $K_{3,3}$. In fact, the proof is simpler than in Proposition \ref{pro1}(ii), as there is more direct local information available to utilize compared to the case of a cycle of length four.
\begin{proposition}\label{pro3}
 Let $G$ be an optimal 1-planar graph. If $|V(G_1)|\ge 3$ and $|V(G_2)|\ge 3$, then $G\ne  G_1\circ G_2$.
\end{proposition} 

\begin{proof}

Suppose $G=G_1\circ G_2$. If 
$|V(G_2)|\ge 4$, then by Lemma \ref{G1_3G2_4},
$G_1\circ G_2$ is not 1-planar, a contradiction. Hence, 
we only consider  $|V(G_2)|=3$.  Let $V(G_1)=\{u_1,u_2,\dots, u_k\}$ where $k\ge 3$ and $V(G_2)=\{v_1,v_2, v_3\}$.

\begin{claim}
$G$ contains a copy of $K_{3,3}$.
\end{claim}
\begin{proof}
By Lemma \ref{lfactor_con}, $G_1$ is connected. Hence, there is an edge 
$u_iu_j$ for $1\le i\ne j\le k$ in $G_1$.  Since $|V(G_2)|\ge 3$, then 
$G$ contains a copy of $K_{3,3}$.
\end{proof}

We denote one copy of $K_{3,3}$ by $T$, where two parts of $T$ are 
$X:=\{(u_i,v_1),(u_i,v_2), (u_i,v_3$)\} and $Y:=\{(u_j,v_1),(u_j,v_2), 
(u_j,v_3)\}$. For brevity,  we relabel $X:=\{u,v,w\}$ and $Y:=\{x,y,z\}$.
Then two observations below follow from  Lemma \ref{oblex1}  directly.

\begin{observation}\label{cla1}
If a vertex $p$ of $G$ is adjacent to one vertex of $X$ (resp. $Y$), then $p$ 
is adjacent to  all vertices of $X$ (resp. $Y$).
\end{observation}

\begin{observation}\label{cla2}
$G[\{x,y,z\}]\cong G[\{u,v,w\}] $.
\end{observation}


\begin{claim}\label{cA2}
$D|T\cong A1$.
\end{claim}
\begin{proof}

By Lemma \ref{Two_D_K33},  $D|T$  is isomorphic to A1 or A2.
If $D|T\cong A2$, by Lemma \ref{2cell} (i), $D$ has the 3-cycle $xyzx$, and it is  uncrossed in $D$, a contradiction to Lemma \ref{clean_oddcycle}. So $D|T\cong A2$.
\end{proof}

\begin{claim}\label{DH}
$D$ contains $D|H$, shown in  Fig \ref{D'} , as  a  subdrawing.  
\end{claim}
 \begin{proof}
By Claim \ref{cA2}, $T\cong A1$.
 Since $xv$ crosses $uy$ in $D|H$,  we have 
$\{uv, xy\} \subseteq E(G)$. Then $uv$ and $xy$ are  uncrossed in $D$ by Lemma 
\ref{2cell} (i). Similarly, $ux$ and $vy$ are uncrossed.
Let $H$ be the subgraph of $G$ obtained from $T$ by adding $xy$ and 
$uv$. So $D$ contains the subdrawing $D|H$  
 \end{proof}

Since $uv$ is uncrossed in
$D$, by Lemma \ref{2cell} (i), $uz$ or $vz$ is crossed in $D$.  By symmetry,
 we may assume that $uz$ is crossed by some edge $st$.
By symmetry of  $s$ and $t$, we discuss five cases, each of which leads to a contradiction regarding 1-planarity or an uncrossed 3-cycle.

\begin{case} $s\notin V(H)$ and  $t\notin V(H)$.\textup{ We may assume that $s\in(uvzu)_{int}$ and $t\in (uvzu)_{out}$, shown in Fig. \ref{K33_cont} (1). As $st$ crosses $uz$, by Lemma \ref{2cell} (i) $su$ is
uncrossed in $D$. By Observation  \ref{cla1}, $\{sw,sv \}\subseteq G$. Since $uv$ is uncrossed in $D$ and $uz$ has been crossed by $st$, $sw$ crosses $vz$, and hence $sv$ is uncrossed 
in $D$ by Lemma \ref{2cell}. Thus,  the 3-cycle$uvsu$ is
uncrossed  in $D$, a contradiction to Lemma \ref{clean_oddcycle}. }
\end{case}

\begin{case}  $s\notin V(H)$ and $t=w$. \textup{  In this case, $s\in (uvzu)_{int}$; see Fig. \ref{K33_cont} (2). Then $sz\in E(G)$ by 
Lemma \ref{2cell} (i), since $sw$ is crossed by $uz$. By  Observation  \ref{cla1}, $sx\in E(G)$. But  $sx$ is crossed twice or will make at 
least one edge on $D|H$ to be crossed at least twice, a contradiction.}
\end{case}

\begin{case}
 $s\notin V(H)$ and  $t=x$. \textup{  In this case,  $s$ lies in $(uvzu)_{int}$; see Fig. 
\ref{K33_cont} (3). Since $sx$ crosses $uz$, it follows that $sz,su \in E(G)$ by Lemma \ref{2cell} (i). By  Observation  \ref{cla1},
$\{sy,sw \}\subseteq E(G)$. As $uz$ is crossed and $uv$ is 
uncrossed in $D$, $sy$ or $sw$, say $sy$, must cross $vz$, 
and hence $sw$ will cross twice or  cause at least one 
edge on $D|H$ to be crossed at least twice, a contradiction.}
\end{case}

\begin{case}
 $s=v$ and $t\notin V(H)$. \textup{In this case,  $uz$ is crossed by $tv$, and $t\in (uxwzu)_{out}$.
 By Lemma \ref{2cell} (i), $tz \in E(G)$, $tu \in E(G)$, and $vz$ and $tu$ are uncrossed in $D$.  Furthermore, by 
 Observation  \ref{cla1}, $ty\in E(G)$.  Then $sy$ crosses $wz$ 
or $wx$. If $ty$ crosses $wz$, then  $yz$ is  uncrossed in $D$ by 
Lemma \ref{2cell} (i), and hence the 3-cycle $vyzv$ is  uncrossed 
 in $D$; see Fig.\ref{K33_cont} (4), a contradiction 
to Lemma \ref{clean_oddcycle}. If $ty$ crosses $wz$, then $tx\in E(D)$ and is 
uncrossed  by Lemma \ref{2cell} (i), as shown in Fig. \ref{K33_cont} 
(5). But now the 3-cycle $tuxt$ is uncrossed  in $D$, a
contradiction to Lemma \ref{clean_oddcycle}.} 
\end{case}

\begin{case} $s=v$ and $t=w$. \textup{In this case, since $vw$ crosses $uz$, $uw$ is an edge of $G$ and $vz$ is 
uncrossed in $D$ by Lemma \ref{2cell} (i); see Fig. \ref{K33_cont} (6). 
Now observe that $G[\{u,v,w\}]\cong C_3$, and thus $G[\{x,y,z\}]\cong C_3$ by Observation \ref{cla2}. Hence, $yz\in E(G)$ and $xy\in E(G)$. Then $xz$ must cross $wy$, and thus $yz$ is uncrossed in $D$ 
by Lemma \ref{2cell}. Recall that $vy$ is also uncrossed in $D$. Thus, 
the 3-cycle $yvzy$ is uncrossed  in $D$, a contradiction to Lemma 
\ref{clean_oddcycle}.}
\end{case}

Thus, we conclude that $G \neq G_1 \circ G_2$, and with this, the proposition is established.
\end{proof}

\begin{figure}
\centering
\begin{tikzpicture}[scale=0.35]
	\begin{pgfonlayer}{nodelayer}
		\node [style=blacknode] (0) at (2.125, -1) {};
		\node [style=blacknode] (1) at (9.725, -1) {};
		\node [style=blacknode] (2) at (5.925, 5.75) {};
		\node [style=square] (3) at (7.675, 0.25) {};
		\node [style=square] (4) at (5.925, 3.25) {};
		\node [style=square] (5) at (4.5, 0.25) {};
		\node [style=blacknode] (6) at (-9.1, 6) {};
		\node [style=square] (7) at (-9.1, -1) {};
		\node [style=square] (8) at (-7.6, 2.5) {};
		\node [style=blacknode] (9) at (-2.1, -1) {};
		\node [style=square] (10) at (-2.1, 6) {};
		\node [style=blacknode] (11) at (-3.6, 2.5) {};
		\node [style=none] (12) at (-9.075, -1.75) {$u$};
		\node [style=none] (13) at (-7.45, 3.25) {$v$};
		\node [style=none] (14) at (-2.125, 7) {$w$};
		\node [style=none] (15) at (-2.1, -1.8) {$x$};
		\node [style=none] (16) at (-3.75, 3.225) {$y$};
		\node [style=none] (17) at (-9.075, 7) {$z$};
		\node [style=none] (18) at (5.225, 0.7) {$u$};
		\node [style=none] (19) at (6.97, 0.7) {$v$};
		\node [style=none] (20) at (5.925, 2.45) {$w$};
		\node [style=none] (21) at (2.175, -1.75) {$x$};
		\node [style=none] (22) at (9.675, -1.75) {$y$};
		\node [style=none] (23) at (5.925, 6.725) {$z$};
		\node [style=none] (24) at (-5.55, -2.75) {(A1)};
		\node [style=none] (25) at (5.95, -2.75) {(A2)};
	\end{pgfonlayer}
	\begin{pgfonlayer}{edgelayer}
		\draw [style={black_thick}] (4) to (0);
		\draw [style={black_thick}] (4) to (1);
		\draw [style={black_thick}] (2) to (3);
		\draw [style={black_thick}] (0) to (3);
		\draw [style={black_thick}] (1) to (3);
		\draw [style={black_thick}] (4) to (2);
		\draw [style={black_thick}] (2) to (5);
		\draw [style={black_thick}] (5) to (0);
		\draw [style={black_thick}] (5) to (1);
		\draw [style={black_thick}] (6) to (7);
		\draw [style={black_thick}] (6) to (8);
		\draw [style={black_thick}] (7) to (9);
		\draw [style={black_thick}] (7) to (11);
		\draw [style={black_thick}] (11) to (8);
		\draw [style={black_thick}] (8) to (9);
		\draw [style={black_thick}] (6) to (10);
		\draw [style={black_thick}] (11) to (10);
		\draw [style={black_thick}] (9) to (10);
	\end{pgfonlayer}
\end{tikzpicture}

 \caption{Two possible restricted 1-planar drawings of $D|T$}
 \label{subK_33}
\end{figure}
 
\begin{figure}
\centering
\begin{tikzpicture}[scale=0.45]
	\begin{pgfonlayer}{nodelayer}
		\node [style=blacknode] (0) at (-5, 6) {};
		\node [style=square] (1) at (-5, -1) {};
		\node [style=square] (2) at (-3.5, 2.5) {};
		\node [style=blacknode] (3) at (2, -1) {};
		\node [style=square] (4) at (2, 6) {};
		\node [style=blacknode] (5) at (0.5, 2.5) {};
		\node [style=none] (6) at (-4.975, -2) {$u$};
		\node [style=none] (7) at (-2.95, 3.25) {$v$};
		\node [style=none] (8) at (1.975, 7) {$w$};
		\node [style=none] (9) at (2, -1.8) {$x$};
		\node [style=none] (10) at (0.1, 3.225) {$y$};
		\node [style=none] (11) at (-4.975, 7.05) {$z$};
	\end{pgfonlayer}
	\begin{pgfonlayer}{edgelayer}
		\draw [style={black_thick}] (0) to (1);
		\draw [style={black_thick}] (0) to (2);
		\draw [style={black_thick}] (1) to (3);
		\draw [style={black_thick}] (1) to (5);
		\draw [style={black_thick}] (5) to (2);
		\draw [style={black_thick}] (2) to (3);
		\draw [style={black_thick}] (0) to (4);
		\draw [style={black_thick}] (5) to (4);
		\draw [style={black_thick}] (3) to (4);
		\draw [style={black_thick}] (2) to (1);
		\draw [style={black_thick}] (5) to (3);
	\end{pgfonlayer}
\end{tikzpicture}

\caption{The subdrawing $D|H$}
\label{D'}
\end{figure}

\begin{figure}[H]
\centering
\begin{tikzpicture}[scale=0.35]
	\begin{pgfonlayer}{nodelayer}
		\node [style=blacknode] (0) at (-16, 14.025) {};
		\node [style=square] (1) at (-16, 7.025) {};
		\node [style=square] (2) at (-14.5, 10.525) {};
		\node [style=blacknode] (3) at (-9, 7.025) {};
		\node [style=square] (4) at (-9, 14.025) {};
		\node [style=blacknode] (5) at (-10.5, 10.525) {};
		\node [style=none] (6) at (-16.225, 6.25) {$u$};
		\node [style=none] (7) at (-13.85, 11.2) {$v$};
		\node [style=none] (8) at (-9.025, 15.1) {$w$};
		\node [style=none] (9) at (-9, 5.975) {$x$};
		\node [style=none] (10) at (-10.7, 11.5) {$y$};
		\node [style=none] (11) at (-15.975, 15.05) {$z$};
		\node [style=rednode] (12) at (-15.5, 10.775) {};
		\node [style=rednode] (13) at (-16.75, 10.775) {};
		\node [style=none] (14) at (-12.5, 3.525) {(1)};
		\node [style=blacknode] (15) at (-4, -0.975) {};
		\node [style=square] (16) at (-4, -7.975) {};
		\node [style=square] (17) at (-2.5, -4.475) {};
		\node [style=blacknode] (18) at (3, -7.975) {};
		\node [style=square] (19) at (3, -0.975) {};
		\node [style=blacknode] (20) at (1.5, -4.475) {};
		\node [style=none] (21) at (-3.975, -8.73) {$u$};
		\node [style=none] (22) at (-2.025, -3.475) {$v$};
		\node [style=none] (23) at (2.975, -0.22) {$w$};
		\node [style=none] (24) at (3, -8.95) {$x$};
		\node [style=none] (25) at (1.1, -3.4) {$y$};
		\node [style=none] (26) at (-3.975, -0.2) {$z$};
		\node [style=rednode] (28) at (-5.5, -4.475) {};
		\node [style=blacknode] (29) at (-16, -0.975) {};
		\node [style=square] (30) at (-16, -7.975) {};
		\node [style=square] (31) at (-14.5, -4.475) {};
		\node [style=blacknode] (32) at (-9, -7.975) {};
		\node [style=square] (33) at (-9, -0.975) {};
		\node [style=blacknode] (34) at (-10.5, -4.475) {};
		\node [style=none] (35) at (-15.975, -8.98) {$u$};
		\node [style=none] (36) at (-14, -3.75) {$v$};
		\node [style=none] (37) at (-9.025, -0.225) {$w$};
		\node [style=none] (38) at (-9, -9.025) {$x$};
		\node [style=none] (39) at (-9.65, -4.4) {$y$};
		\node [style=none] (40) at (-15.975, -0.3) {$z$};
		\node [style=rednode] (42) at (-17.5, -4.475) {};
		\node [style=blacknode] (43) at (-4, 14.025) {};
		\node [style=square] (44) at (-4, 7.025) {};
		\node [style=square] (45) at (-2.5, 10.525) {};
		\node [style=blacknode] (46) at (3, 7.025) {};
		\node [style=square] (47) at (3, 14.025) {};
		\node [style=blacknode] (48) at (1.5, 10.525) {};
		\node [style=none] (49) at (-3.975, 6.25) {$u$};
		\node [style=none] (50) at (-2.225, 11.25) {$v$};
		\node [style=none] (51) at (2.975, 15.025) {$w$};
		\node [style=none] (52) at (3, 5.975) {$x$};
		\node [style=none] (53) at (1.05, 11.35) {$y$};
		\node [style=none] (54) at (-4, 15.0) {$z$};
		\node [style=rednode] (55) at (-3.5, 10.525) {};
		\node [style=blacknode] (57) at (8, 14.025) {};
		\node [style=square] (58) at (8, 7.025) {};
		\node [style=square] (59) at (9.5, 10.525) {};
		\node [style=blacknode] (60) at (15, 7.025) {};
		\node [style=square] (61) at (15, 14.025) {};
		\node [style=blacknode] (62) at (13.5, 10.525) {};
		\node [style=none] (63) at (8.025, 6.25) {$u$};
		\node [style=none] (64) at (10.05, 11.25) {$v$};
		\node [style=none] (65) at (14.975, 15.025) {$w$};
		\node [style=none] (66) at (15, 6.225) {$x$};
		\node [style=none] (67) at (12.95, 11.25) {$y$};
		\node [style=none] (68) at (8.025, 14.8) {$z$};
		\node [style=rednode] (69) at (8.5, 10.775) {};
		\node [style=blacknode] (71) at (8, -0.975) {};
		\node [style=square] (72) at (8, -7.975) {};
		\node [style=square] (73) at (9.5, -4.475) {};
		\node [style=blacknode] (74) at (15, -7.975) {};
		\node [style=square] (75) at (15, -0.975) {};
		\node [style=blacknode] (76) at (13.5, -4.475) {};
		\node [style=none] (77) at (8.025, -8.98) {$u$};
		\node [style=none] (78) at (9.8, -4) {$v$};
		\node [style=none] (79) at (14.975, -0.225) {$w$};
		\node [style=none] (80) at (15, -8.95) {$x$};
		\node [style=none] (81) at (13.35, -3.55) {$y$};
		\node [style=none] (82) at (8.025, -0.2) {$z$};
		\node [style=none] (85) at (-12.5, -11.475) {(4)};
		\node [style=none] (86) at (-0.5, -11.475) {(5)};
		\node [style=none] (87) at (-0.5, 3.525) {(2)};
		\node [style=none] (88) at (11.5, 3.525) {(3)};
		\node [style=none] (89) at (11.5, -11.475) {(6)};
		\node [style=none] (90) at (-15.5, 11.5) {$s$};
		\node [style=none] (91) at (-17.25, 10.75) {$t$};
		\node [style=none] (92) at (-2.75, 1.025) {};
		\node [style=none] (93) at (-18.5, -4.475) {$t$};
		\node [style=none] (94) at (-6.25, -4.475) {$t$};
		\node [style=none] (95) at (-12.5, 8.225) {$c$};
		\node [style=none] (96) at (-12.525, -6.7) {$c$};
		\node [style=none] (97) at (-0.5, -6.65) {$c$};
		\node [style=none] (98) at (11.5, -6.6) {$c$};
		\node [style=none] (99) at (-0.5, 8.375) {$c$};
		\node [style=none] (100) at (11.425, 8.3) {$c$};
		\node [style=none] (102) at (8.95, 10.25) {$s$};
		\node [style=none] (103) at (-3.25, 9.775) {$s$};
	\end{pgfonlayer}
	\begin{pgfonlayer}{edgelayer}
		\draw [style={black_thick}] (0) to (1);
		\draw [style={black_thick}] (0) to (2);
		\draw [style={black_thick}] (1) to (3);
		\draw [style={black_thick}] (1) to (5);
		\draw [style={black_thick}] (5) to (2);
		\draw [style={black_thick}] (2) to (3);
		\draw [style={black_thick}] (0) to (4);
		\draw [style={black_thick}] (5) to (4);
		\draw [style={black_thick}] (3) to (4);
		\draw [style={black_thick}] (5) to (3);
		\draw (12) to (13);
		\draw [style={black_thick}] (2) to (1);
		\draw [style={rededge_thick}] (12) to (1);
		\draw [style={rededge_thick}] (12) to (2);
		\draw [style={black_thick}] (15) to (16);
		\draw [style={black_thick}] (15) to (17);
		\draw [style={black_thick}] (16) to (18);
		\draw [style={black_thick}] (16) to (20);
		\draw [style={black_thick}] (20) to (17);
		\draw [style={black_thick}] (17) to (18);
		\draw [style={black_thick}] (15) to (19);
		\draw [style={black_thick}] (20) to (19);
		\draw [style={black_thick}] (18) to (19);
		\draw [style={black_thick}] (20) to (18);
		\draw [style={black_thick}] (17) to (16);
		\draw [style={black_thick}] (29) to (30);
		\draw [style={black_thick}] (30) to (32);
		\draw [style={black_thick}] (30) to (34);
		\draw [style={black_thick}] (31) to (32);
		\draw [style={black_thick}] (29) to (33);
		\draw [style={black_thick}] (34) to (33);
		\draw [style={black_thick}] (32) to (33);
		\draw [style={black_thick}] (34) to (32);
		\draw [style={black_thick}] (31) to (30);
		\draw [style={black_thick}] (43) to (44);
		\draw [style={black_thick}] (43) to (45);
		\draw [style={black_thick}] (44) to (46);
		\draw [style={black_thick}] (44) to (48);
		\draw [style={black_thick}] (48) to (45);
		\draw [style={black_thick}] (45) to (46);
		\draw [style={black_thick}] (43) to (47);
		\draw [style={black_thick}] (48) to (47);
		\draw [style={black_thick}] (46) to (47);
		\draw [style={black_thick}] (48) to (46);
		\draw [style={black_thick}] (45) to (44);
		\draw [style={black_thick}] (57) to (58);
		\draw [style={black_thick}] (57) to (59);
		\draw [style={black_thick}] (58) to (60);
		\draw [style={black_thick}] (58) to (62);
		\draw [style={black_thick}] (62) to (59);
		\draw [style={black_thick}] (59) to (60);
		\draw [style={black_thick}] (57) to (61);
		\draw [style={black_thick}] (62) to (61);
		\draw [style={black_thick}] (60) to (61);
		\draw [style={black_thick}] (62) to (60);
		\draw [style={black_thick}] (59) to (58);
		\draw [style=rededge] (69) to (58);
		\draw [style={black_thick}] (71) to (72);
		\draw [style={black_thick}] (71) to (73);
		\draw [style={black_thick}] (72) to (74);
		\draw [style={black_thick}] (72) to (76);
		\draw [style={black_thick}] (76) to (73);
		\draw [style={black_thick}] (73) to (74);
		\draw [style={black_thick}] (71) to (75);
		\draw [style={black_thick}] (76) to (75);
		\draw [style={black_thick}] (74) to (75);
		\draw [style={black_thick}] (76) to (74);
		\draw [style={black_thick}] (73) to (72);
		\draw [style=rededge] (17) to (28);
		\draw [style=rededge] (15) to (28);
		\draw [style={rededge_thick}] (16) to (28);
		\draw [style={rededge_thick}, in=-165, out=-105, looseness=1.75] (28) to (18);
		\draw [style=rededge, in=105, out=-165] (92.center) to (28);
		\draw [style=rededge, in=45, out=15, looseness=2.25] (92.center) to (20);
		\draw [style=rededge, in=90, out=105, looseness=2.25] (42) to (34);
		\draw [style=rededge] (29) to (42);
		\draw [style=rededge] (31) to (42);
		\draw [style={rededge_thick}] (29) to (34);
		\draw [style={black_thick}] (29) to (31);
		\draw [style={black_thick}] (31) to (34);
		\draw [style=rededge, in=150, out=120, looseness=2.25] (55) to (47);
		\draw [style=rededge] (12) to (4);
		\draw [style=rededge, in=-150, out=-120, looseness=2.25] (69) to (60);
		\draw [style=rededge] (57) to (69);
		\draw [style=rededge, in=150, out=135, looseness=2.75] (73) to (75);
		\draw [style=rededge, in=-10, out=100, looseness=1.50] (74) to (71);
		\draw [style=rededge, in=-75, out=-30, looseness=2.25] (72) to (75);
		\draw [style={rededge_thick}] (76) to (71);
		\draw [style=rededge] (55) to (43);
		\draw [style=rededge, in=75, out=90, looseness=1.50] (69) to (62);
		\draw [style=rededge] (42) to (30);
	\end{pgfonlayer}
\end{tikzpicture}

				\caption{The possible six subdrawings involving a crossed edge $uz$}
\label{K33_cont}
\end{figure}

\section{Proof of Theorems \ref{main}, \ref{coro1} and \ref{coro0}}

\noindent {\bf  Proof of Theorem \ref{main}.} The ``only if'' part of the theorem  follows directly from Lemma \ref{lem:K2222}. By combining Proposition \ref{pro1} with Proposition \ref{pro3}, the ``if'' part of the theorem is established. \proofend

\noindent {\bf  Proof of Theorem \ref{coro1}.} Since  $n\ge 5$,  
by Theorem \ref{main}, $G\circ 2K_1$ is not optimal 1-planar. Thus, 
$|E(G\circ 2K_1)|\le 4\times 2n-9$.  By Lemma \ref{lex_edges}, $|E(G\circ 
2K_1)|=4m$. So  $4m\le 8n-9$, and thus 
$m\le \lfloor\frac{8n-9}{4}\rfloor=2n-3$, as desired.  
 \proofend

\noindent {\bf  Proof of Theorem \ref{coro0}.}
If $n\ge 10$, then $m\le 4n-8$ by Lemma \ref{maxedges}. Theorem 
\ref{main} implies  that any optimal $1$-planar graph is irreducible if it has 
at least $10$ vertices, and thus $m\le 4n-9$ for $n\ge 10$ (since $G$ is 
reducible). For if $n=9$, then any 1-planar graph with $9$ vertices has at most 
$4\times 9-9$ edges, and thus $m\le 4\times 9-9$ by Lemma \ref{maxedges}.
 Let  
$P_{\frac{n}{3}}$ be a 
path with $\frac{n}{3}$ vertices where $n\ge 9$ and $n \equiv 0~(\bmod 3)$. Let $V(P_{\frac{n}{3}})=\{u_1,u_2,\dots,u_{\frac{n}{3}}\}$.  Let 
$H=P_{\frac{n}{3}}\circ C_3$. Clearly, $H$ is reducible, and $|E(H)|=4n-9$. 
Furthermore, a 1-planar drawing of $H$ is shown in  Fig. \ref{colfig}, where $C_{u_i}$ is a 3-cycle  in $H$ that 
corresponds to  $u_i$ in $P_{\frac{n}{3}}$ for $1\le i\le 
\frac{n}{3}$. Thus, $4n-9$ is a tight bound for every $k\ge 3$ and $n=3k$.
\proofend

\begin{figure}
	\centering

\begin{tikzpicture}[scale=0.45]
	\begin{pgfonlayer}{nodelayer}
		\node [style=blacknode] (0) at (0.75, 6) {};
		\node [style=blacknode] (1) at (-2.65, 0) {};a
		\node [style=blacknode] (2) at (4.25, 0) {};
		\node [style=blacknode] (3) at (0.75, 3.95) {};
		\node [style=blacknode] (4) at (-0.925, 1) {};
		\node [style=blacknode] (5) at (2.45, 1) {};
		\node [style=blacknode] (6) at (-4.5, -1) {};
		\node [style=blacknode] (7) at (6, -1) {};
		\node [style=blacknode] (11) at (0.75, 8) {};
		\node [style=none] (12) at (0.75, 2.5) {$\vdots$};
		\node [style=none] (13) at (1.88, 7) {\scriptsize $C_{u_1}$};
		\node [style=none] (14) at (1.6, 5.5) {\scriptsize $C_{u_2}$};
		\node [style=none] (15) at (1.6, 3.5) {\scriptsize $C_{u_3}$};
	\end{pgfonlayer}
	\begin{pgfonlayer}{edgelayer}
		\draw [style={blue_thick_opacity}] (0) to (3);
		\draw [style={blue_thick_opacity}] (4) to (1);
		\draw [style={blue_thick_opacity}] (5) to (2);
		\draw [style={blue_thick_opacity}] (0) to (4);
		\draw [style={blue_thick_opacity}] (1) to (3);
		\draw [style={blue_thick_opacity}] (3) to (2);
		\draw [style={blue_thick_opacity}] (0) to (5);
		\draw [style={blue_thick_opacity}] (1) to (5);
		\draw [style={blue_thick_opacity}] (4) to (2);
		\draw [style={blue_thick_opacity}] (6) to (0);
		\draw [style={blue_thick_opacity}] (6) to (1);
		\draw [style={blue_thick_opacity}] (6) to (2);
		\draw [style={blue_thick_opacity}] (7) to (1);
		\draw [style={blue_thick_opacity}] (7) to (2);
		\draw [style={blue_thick_opacity}] (7) to (0);
		\draw [style={black_thick}] (0) to (1);
		\draw [style={black_thick}] (0) to (2);
		\draw [style={black_thick}] (2) to (1);
		\draw [style={black_thick}] (3) to (4);
		\draw [style={black_thick}] (3) to (5);
		\draw [style={black_thick}] (4) to (5);
		\draw [style={black_thick}] (11) to (6);
		\draw [style={black_thick}] (6) to (7);
		\draw [style={black_thick}] (11) to (7);
		\draw [style={blue_thick_opacity}] (11) to (1);
		\draw [style={blue_thick_opacity}] (11) to (2);
		\draw [style={blue_thick_opacity}] (11) to (0);
	\end{pgfonlayer}
\end{tikzpicture}

\caption{ A 1-planar drawing of $H=P_{\frac{n}{3}}\circ C_3$}
\label{colfig}
\end{figure}

 \begin{remark}
 We believe that the bound in Theorem \ref{coro1} we provided is still far from resolving Problem \ref{openPro}.
 \end{remark}
\section{Acknowledgment}
We claim that there is no conflict of interest in our paper.
No data was used for the research described in the article.
The work was supported by the National
Natural Science Foundation of China (Grant No. 12271157, 12371346) and the Postdoctoral Science Foundation of China (Grant  No. 2024M760867).

 \nocite{*}
\end{document}